\newtheorem{thm}{Theorem}[section]
\newtheorem{lem}[thm]{Lemma}
\newtheorem{exa}[thm]{Example}
\theoremstyle{definition}
\newcommand{\scr}[1]{\mathscr #1}
\definecolor{wco}{rgb}{0.5,0.2,0.3}
\numberwithin{equation}{section} \theoremstyle{remark}
\newtheorem{rem}{Remark}[section]
\newcommand{\ua}{\uparrow}
\title{{\bf Transportation Cost Inequalities for Neutral Functional SDEs}
}
\author{
{\bf  Jianhai Bao and Chenggui Yuan}\\
 \footnotesize{Department of Mathematics,
Swansea University, Singleton Park, SA2 8PP, UK}\\
\footnotesize{ majb@Swansea.ac.uk, \, C.Yuan@swansea.ac.uk}}
\begin{document}
\def\R{\mathbb R}  \def\ff{\frac} \def\ss{\sqrt} \def\B{\mathbf
B}
\def\N{\mathbb N} \def\kk{\kappa} \def\m{{\bf m}}
\def\dd{\delta} \def\DD{\Delta} \def\vv{\varepsilon} \def\rr{\rho}
\def\<{\langle} \def\>{\rangle} \def\GG{\Gamma} \def\gg{\gamma}
  \def\nn{\nabla} \def\pp{\partial} \def\EE{\scr E}
\def\d{\text{\rm{d}}} \def\bb{\beta} \def\aa{\alpha} \def\D{\scr D}
  \def\si{\sigma} \def\ess{\text{\rm{ess}}}
\def\beg{\begin} \def\beq{\begin{equation}}  \def\F{\scr F}
\def\Ric{\text{\rm{Ric}}} \def\Hess{\text{\rm{Hess}}}
\def\e{\text{\rm{e}}} \def\ua{\underline a} \def\OO{\Omega}  \def\oo{\omega}
 \def\tt{\tilde} \def\Ric{\text{\rm{Ric}}}
\def\cut{\text{\rm{cut}}} \def\P{\mathbb P} \def\ifn{I_n(f^{\bigotimes n})}
\def\C{\scr C}      \def\aaa{\mathbf{r}}     \def\r{r}
\def\gap{\text{\rm{gap}}} \def\prr{\pi_{{\bf m},\varrho}}  \def\r{\mathbf r}
\def\Z{\mathbb Z} \def\vrr{\varrho} \def\l{\lambda}
\def\L{\scr L}\def\Tt{\tt} \def\TT{\tt}\def\II{\mathbb I}
\def\i{{\rm in}}\def\Sect{{\rm Sect}}\def\E{\mathbb E} \def\H{\mathbb H}
\def\M{\scr M}\def\Q{\mathbb Q} \def\texto{\text{o}} \def\LL{\Lambda}
\def\Rank{{\rm Rank}} \def\B{\scr B} \def\i{{\rm i}} \def\HR{\hat{\R}^d}
\def\to{\rightarrow}\def\l{\ell}
\def\8{\infty}

\maketitle

\begin{abstract}
A class of functional differential equations are investigated. Using the Girsanov-transformation argument
 we establish the quadratic transportation cost inequalities  for a class of
finite-dimensional  neutral functional stochastic differential
equations and infinite-dimensional neutral functional
stochastic partial differential equations under different metrics.
\\

\noindent
 {\bf AMS subject Classification:}\    65G17, 65G60    \\
\noindent
{\bf Keywords:}
transportation cost inequality,   Girsanov transformation,  functional stochastic differential equation

 \end{abstract}
 \vskip 2cm

\section{Introduction}

Let $(E,\d)$ be a  metric space equipped
with a $\sigma$-algebra $\mathscr{B}(E)$ such that metric
$\d(\cdot,\cdot)$ is  $\mathscr{B}(E)\times  \mathscr{B}(E)$ measurable. For any $p\geq1$ and  probability measures $\mu$
and $\nu$ on $(E,\mathscr{B}(E),\d)$, the $L^p$-Wasserstein distance
between $\mu$ and $\nu$ is defined by
\begin{equation*}
W_{p,\d}(\mu,\nu):=\inf_{\pi\in\mathscr{C}(\mu,\nu)}\Big\{\int_{E\times
E}\d^p(x,y)\pi(\d x,\d y)\Big\}^{1/p},
\end{equation*}
where $\mathscr{C}(\mu,\nu)$ denotes the totality of probability
measures on $E\times E$ with the marginals $\mu$ and $\nu$. In many
practical situations, it is quite useful to find an upper bound for
the metric $W_{p,\d}(\mu,\nu)$, where a fully satisfactory one given
by  Talagrand  \cite{T96} is the relative entropy of $\nu$ with
respect to $\mu$
\begin{equation*}
\mbox{H}(\nu|\mu):=
\begin{cases}
\int\ln \frac{\d \nu}{\d \mu}\d \nu, \ \ \ \nu\ll \mu,\\
+\infty, \ \ \ \ \ \ \ \ \mbox{ otherwise},
\end{cases}
\end{equation*}
where $\nu\ll\mu$ means that $\nu$ is absolutely continuous with
respect to $\mu$. We  say that the probability measure $\mu$ on
$(E, \d)$ satisfies an $L^p$ transportation cost
inequality (TCI) with a constant $C>0$ if
\begin{equation*}
W^2_{p,d}(\mu,\nu)\leq2 C\mbox{H}(\nu|\mu)
\end{equation*}
for any probability measure $\nu$. As
usual, we write $\mu\in T_p(C)$ for this relation.

Concentration inequalities and their applications have become an
integral part of modern probability theory. Here we highlight three
 monographs: Ledoux \cite{L}, \"Ust\"unel \cite{U} and  Villani
\cite{V03}. One of the powerful tools to show such concentration
estimates for diffusions is TCI, where quadratic TCI is unique in
its advantages and related to the log-Sobolev inequality,
hypercontractivity, Poincar\'{e} inequality, inf-convolution, and
Hamilton-Jacobi equations, for details, see, e.g., Bobkov and
G\"otze \cite{BG99}, Gozlan, Roberto and Samson \cite{GRS11} Otto
and Villani \cite{OV}. Recently, there are extensive literature on
the topic of transportation cost inequalities (TCIs) in all kinds of
path spaces, e.g., in \cite{DGW04} Djellout, Guilin and Wu
investigate the stability under weak convergence, $T_1(C), T_2(C)$
and applications for random dynamical systems on Wiener space,  in
\cite{P11} Pal shows that probability laws of certain
multidimensional semimartingales satisfy quadratic TCI,  in
\cite{U10}  \"Ust\"unel proves TCI for the laws of diffusion
processes where the drift  depends on the full history.
 On Poisson space Wu \cite{W10} develops the $W_1H$ inequality for stochastic differential equation (SDEs)
with pure jumps, and Ma \cite{M10} discusses the $W_1H$ inequality
for SDEs driven by the Brownian motion  and the jump  together.
Wang  \cite{W04} establishes some TCIs
on the path space over a connected complete Riemannian manifold with
Ricci curvature bounded from below.   Fang and  Shao \cite{FS07}
provides  optimal transport maps for Monge-Kantorovich problem on
connected Lie groups, to name a few.

Moreover, many other methods have  been applied to establish TCIs,
e.g., Large deviation principle is used in \cite{GL07},   and
ensorization is implemented in \cite{W04}. In particular, the
Girsanov transformation theorem has been utilized in \cite{DGW04} to
provide a characterization of $L^1$-TCIs for diffusions, which has
been generalized to different setting, e.g., infinite-dimensional
dynamical systems in \cite{WZ06}, time-inhomogenous diffusions in
\cite{P11}, multivalued SDEs and singular SDEs in \cite{U10}, and
SDEs driven by a fractional Brownian motion in \cite{S11}. On the
other hand, due to the Girsanov transformation is unavailable for
the jump cases, recently Malliavin calculus technique is applied in
\cite{M10, W10}.

Motivated by the previous works,
 and references therein,  we shall establish
the $T_2$-TCIs for a class of finite-dimensional neutral functional
SDEs and infinite-dimensional  neutral functional stochastic partial
differential equations (SPDEs). A neutral functional differential
equation (see \cite{HH98,L06, M97}) is one in which the derivatives
of the past history are involved, as well as those of the present
state of the system.

The rest of the paper is organised as follows:  the $T_2$-TCIs for a
class of finite-dimensional  neutral functional SDEs are studied in
Section 2, in Section 3 we show the $T_2$-TCIs for
infinite-dimensional neutral functional stochastic partial
differential equations (SPDEs).

\section{TCIs for Neutral Functional SDEs}
For a positive integer $n$,  let
$(\mathbb{R}^n,\langle\cdot,\cdot\rangle,|\cdot|)$ be the Euclidean
space and $\|A\|_{HS}:=\sqrt{\mbox{trace}(A^*A)}$, the
Hilbert-Schmidt norm for a matrix $A$, where $A^*$ is the  transpose
of $A$. Let  $\tau>0$  and $T>0$ be two constants.  Denote
$\mathscr{C}:=\mathcal {C}([-\tau,0];\mathbb{R}^n)$ by the family of
continuous functions $\xi:[-\tau,0]\mapsto\mathbb{R}^n$ and  $
\mathcal {C}([-\tau,T];\mathbb{R}^n)$ the set of continuous
functions on $[-\tau,T]$.  Let $X \in \mathcal
{C}([-\tau,T];\mathbb{R}^n)$ and $t\in[0,T]$, we define the segment
$X_t\in\mathscr{C}$ of $X$ by $X_t(\theta):=X(t+\theta)$ for
$\theta\in[-\tau,0]$, where $X(t)\in\mathbb{R}^n$ is a point, while
$X_t\in\mathscr{C}$ is a continuous $\mathbb{R}^n$-valued function
on $[-\tau,0]$. Denote $\mathcal {W}([-\tau,0];\mathbb{R}_+)$ by the
family of Borel-measurable functions
$w:[-\tau,0]\mapsto\mathbb{R}_+$ such that
$\int_{-\tau}^0w(\theta)\d \theta=1$. Throughout the paper $C>0$ is
a generic constant whose values may change for its different
appearances.

Consider neutral functional SDE on $\mathbb{R}^n$
\begin{equation}\label{eq1}
\begin{cases}
\d[X(t)-G(X_t)]=b(X_t)\d t+\sigma(X_t)\d W(t), \ \ \ t\in[0,T],\\
X_0=\xi\in\mathscr{C}.
\end{cases}
\end{equation}
Here $G,b:\mathscr{C}\rightarrow\mathbb{R}^n$,
$\sigma:\mathscr{C}\rightarrow\mathbb{R}^{n\times m}$, and $W$ is an
$m$-dimensional Brownian motion defined on some filtered complete
probability space $(\Omega,\mathcal {F},\{\mathcal
{F}_{t}\}_{t\geq0},\mathbb{P})$. Assume that $G(0)=0$ and there
exist $\kappa\in(0,1)$ and $w_1\in\mathcal
{W}([-\tau,0];\mathbb{R}_+)$ such that
\begin{equation}\label{eq7}
|G(\xi)-G(\eta)|^2\leq\kappa\int_{-\tau}^0w_1(\theta)|\xi(\theta)-\eta(\theta)|^2\d
\theta, \ \ \ \xi,\eta\in\mathscr{C}.
\end{equation}
We further assume that $b$ and $\sigma$ are locally Lipschitzian and
there exist $\delta>0,\lambda_1\in\mathbb{R}, \lambda_2\geq0$ and
$w_2\in\mathcal {W}([-\tau,0];\mathbb{R}_+)$ such that
\begin{equation}\label{eq9}
2\langle\xi(0)-G(\xi),b(\xi)\rangle+\|\sigma(\xi)\|_{HS}^2
\leq\delta\Big(1+|\xi(0)|^2+\int_{-\tau}^0|\xi(\theta)|^2\d\theta\Big),\
\ \ \xi\in\mathscr{C},
\end{equation}
and for arbitrary $\xi,\eta\in\mathscr{C}$
\begin{equation}\label{eq8}
\begin{split}
2\langle\xi(0)&-G(\xi)-(\eta(0)-G(\eta)),b(\xi)-b(\eta)\rangle+\|\sigma(\xi)-\sigma(\eta)\|_{HS}^2\\
&\leq-\lambda_1|\xi(0)-\eta(0)|^2+\lambda_2\int_{-\tau}^0w_2(\theta)|\xi(\theta)-\eta(\theta)|^2\d\theta.
\end{split}
\end{equation}

\begin{rem}
{\rm If we take $G(\xi)=G(\xi(-\tau)),
b(\xi)=b(\xi(0),\xi(-\tau)),\si(\xi)=\si(\xi(0),\xi(-\tau)),\xi\in\mathscr{C},$
then the equation \eqref{eq1} becomes a delay SDE, that is:
\begin{equation*}
\begin{cases}
\d[X(t)-G(X(t-\tau)]=b(X(t),X(t-\tau))\d t+\sigma(X(t),X(t-\tau))\d W(t), \ \ \ t\in[0,T],\\
X_0=\xi\in\mathscr{C}.
\end{cases}
\end{equation*}
There are many examples satisfying \eqref{eq7}-\eqref{eq8}. Actually
\eqref{eq7} becomes 
$|G(\xi(-\tau))-G(\eta(-\tau))|$ $\leq\kappa|\xi(-\tau)-\eta(-\tau)|$
for some $\kappa\in(0,1)$ by taking $w(\theta)=1/\tau$ for
$\theta\in[-\tau,0]$ (In fact, for the delay we can take
$\kappa\geq1$ by an induction argument). Moreover, since $b$ and
$\sigma$ are locally Lipschitzian, Eq. \eqref{eq1} has a unique
local solution. On the other hand, \eqref{eq9} can suppress the
explosion of the solution in a finite-time interval. Therefore Eq.
\eqref{eq1} has a unique solution
$\{X(t,\xi)\}$ on $[0, T]$. }
\end{rem}

Let  $T=N$ be  a positive integer. For any $\gamma_1, \gamma_2\in
\mathcal {X}:=\mathcal {C}([0,T];\mathbb{R}^n)$, we define
\begin{equation*}
\d_{\infty,1}(\gamma_1,\gamma_2):=\Big\{\sum_{k=0}^{N-1}\sup_{k\leq
t\leq k+1}|\gamma_1(t)-\gamma_2(t)|^2\Big\}^{1/2}, \ \ \
\gamma_1,\gamma_2\in\mathcal {X}.
\end{equation*}
Then  $\mathcal {X}$ is a Banach space  with respect to
$\d_{\infty,1}$.

\begin{thm}\label{Theorem 1}
{\rm Let \eqref{eq7}-\eqref{eq8} hold and $\mathbb{P}_\xi$ be the
law of $X(\cdot,\xi)$, solution process of Eq. \eqref{eq1}. Assume
that $\sigma$ is bounded by
$\tilde{\sigma}:=\sup_{\xi\in\mathscr{C}}\|\sigma(\xi)\|_{HS}<\infty$,
$\lambda_1-\lambda_2>0$, and there exists $\lambda_3>0$ such that
\begin{equation}\label{eq38}
\|\sigma(\xi)-\sigma(\eta)\|_{HS}^2\leq\lambda_3\int_{-\tau}^0|\xi(\theta)-\eta(\theta)|^2\d\theta,
\ \ \ \xi,\eta\in\mathscr{C}.
\end{equation}
Then  $\mathbb{P}_\xi\in T_2(C)$ on $\mathcal {X}$ under the metric
$\d_{\infty,1}$, where $C>0$ is dependent on
$\kappa,\lambda_i,i=1,2,3,\tilde{\sigma}, N$ and the universal
constant from the Burkhold-Davis-Gundy inequality. Moreover, under
the previous assumptions, except \eqref{eq38},  $\mathbb{P}_\xi\in
T_2(C)$ for some $C>0$ under the metric
\begin{equation*}
\d_{L^2}(\gamma_1,\gamma_2):=\Big(\int_0^T|\gamma_1(t)-\gamma_1(t)|^2\d
t\Big)^{\frac{1}{2}},\ \ \ \gamma_1,\gamma_2\in\mathcal {X},
\end{equation*}
where $C>0$ is independent of $T$ whenever $\lambda_1-\lambda_2>0$,
otherwise dependent on $T$. }
\end{thm}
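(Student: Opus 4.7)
The plan is to adapt the Girsanov-transformation approach of Djellout--Guillin--Wu \cite{DGW04} to the neutral setting. Given any probability measure $\nu$ on $\mathcal X$ with $\nu\ll\P_\xi$ and $\mbox{H}(\nu|\P_\xi)<\infty$, a standard Girsanov construction (see \cite{DGW04, WZ06}) produces a filtered probability space $(\Omega,\mathcal F,\{\mathcal F_t\},\P)$ carrying an $m$-dimensional $\P$-Brownian motion $W$, an $\{\mathcal F_t\}$-predictable process $h$ with $\tfrac12\E\int_0^T|h(s)|^2\d s\le \mbox{H}(\nu|\P_\xi)$, and processes $X,Y$ with $\mathrm{Law}(X)=\P_\xi$, $\mathrm{Law}(Y)=\nu$ solving, respectively,
\begin{equation*}
\d[X(t)-G(X_t)]=b(X_t)\d t+\sigma(X_t)\d W(t),\qquad X_0=\xi,
\end{equation*}
\begin{equation*}
\d[Y(t)-G(Y_t)]=[b(Y_t)+\sigma(Y_t)h(t)]\d t+\sigma(Y_t)\d W(t),\qquad Y_0=\xi.
\end{equation*}
The pair $(X,Y)$ is a coupling of $\P_\xi$ and $\nu$, and since $W_{2,\rho}^2(\P_\xi,\nu)\le \E[\rho(X,Y)^2]$ for either $\rho=\d_{\infty,1}$ or $\rho=\d_{L^2}$, the TCI reduces to bounding $\E[\rho(X,Y)^2]$ by a constant multiple of $\E\int_0^T|h(s)|^2\d s$.

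Next I would set $Z(t):=X(t)-Y(t)-(G(X_t)-G(Y_t))$ and apply It\^o's formula to $|Z(t)|^2$. The drift and diffusion combine, via the dissipativity hypothesis \eqref{eq8}, into
\begin{equation*}
2\langle Z(t),b(X_t)-b(Y_t)\rangle+\|\sigma(X_t)-\sigma(Y_t)\|_{HS}^2\le -\lambda_1|X(t)-Y(t)|^2+\lambda_2\int_{-\tau}^0 w_2(\theta)|X(t+\theta)-Y(t+\theta)|^2\d\theta.
\end{equation*}
The cross term $-2\langle Z(t),\sigma(Y_t)h(t)\rangle$ is handled by Young's inequality and $\|\sigma\|_{HS}\le\tilde\sigma$, producing $\vv|Z(t)|^2+\tilde\sigma^2\vv^{-1}|h(t)|^2$. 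To convert $|Z|^2$ back into $|X-Y|^2$, I use the neutral contraction \eqref{eq7} through $|X-Y|^2\le(1+\alpha)|Z|^2+(1+\alpha^{-1})\kappa\int_{-\tau}^0 w_1(\theta)|X(t+\theta)-Y(t+\theta)|^2\d\theta$, and then absorb the history integrals by the Fubini shift $\int_0^t\int_{-\tau}^0 w_i(\theta)|X(s+\theta)-Y(s+\theta)|^2\d\theta\,\d s\le \int_{-\tau}^t|X(u)-Y(u)|^2\d u$, using $X_0=Y_0=\xi$ to drop the contribution on $[-\tau,0]$.

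For the metric $\d_{\infty,1}$ I would estimate $\E\sup_{k\le t\le k+1}|X(t)-Y(t)|^2$ on each unit subinterval $[k,k+1]$ and sum over $k=0,\dots,N-1$. The Burkholder--Davis--Gundy inequality applied to $\int_0^\cdot(\sigma(X_s)-\sigma(Y_s))\d W(s)$ generates $\int_0^{k+1}\|\sigma(X_s)-\sigma(Y_s)\|_{HS}^2\d s$, which condition \eqref{eq38} turns into a time-integrated $|X-Y|^2$ bound; a discrete Gronwall loop in $k$ then yields $\E\,\d_{\infty,1}(X,Y)^2\le C\,\E\int_0^T|h(s)|^2\d s$ with $C=C(\kappa,\lambda_1,\lambda_2,\lambda_3,\tilde\sigma,N,c_{\mathrm{BDG}})$. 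For the $\d_{L^2}$ bound no pathwise supremum is required, BDG (and therefore \eqref{eq38}) is dispensed with, and the integrated It\^o estimate directly controls $\E\int_0^T|X(t)-Y(t)|^2\d t$; when $\lambda_1-\lambda_2>0$ the resulting Gronwall inequality carries a genuinely dissipative coefficient, giving a $C$ independent of $T$, while without this gap one still closes with $T$-dependence.

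The main obstacle I anticipate is coordinating the three Young parameters $\alpha,\vv$ (and a further one from the BDG step) so that, after the delay-integral shift, the coefficient in front of the history term is strictly below $1$ and the coefficient of $|X(t)-Y(t)|^2$ retains enough negativity for Gronwall to close uniformly on each block $[k,k+1]$. The combined hypothesis $\kappa<1$ together with $\lambda_1>\lambda_2$ is precisely what permits this joint absorption; any weakening would destroy either the summability across the $N$ unit blocks in the $\d_{\infty,1}$ case or the $T$-independence of $C$ in the $\d_{L^2}$ case.
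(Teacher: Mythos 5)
Your proposal is correct and follows essentially the same route as the paper: the Girsanov coupling of $\mathbb{P}_\xi$ with the drift-shifted equation, It\^o's formula applied to the neutral difference $Z(t)=X(t)-Y(t)-(G(X_t)-G(Y_t))$ (the paper's $M(t)$), Young's inequality plus the Fubini shift of the delay integrals, blockwise sup-estimates with Burkholder--Davis--Gundy and \eqref{eq38} followed by a discrete iteration over $k$ for $\d_{\infty,1}$, and the direct time-integrated estimate (no BDG, hence no \eqref{eq38}) for $\d_{L^2}$. The coordination of the Young parameters with $\kappa<1$ and $\lambda_1>\lambda_2$ that you flag is exactly how the paper closes the argument.
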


\begin{proof}
It should be  pointed out that the argument is motivated by that of
\cite{DGW04,WZ04}, where the key point is to express the finiteness
of the entropy by means of the energy of the drift from the Girsanov
transformation of a well chosen probability measure. In the sequel
we divide the proof into two steps to show the desired assertions under two
different metrics.

({\bf1}) Let $\mathbb{P}_\xi$ be the law of $X(\cdot,\xi)$ on
$\mathcal {X}$ and $\mathbb{Q}$ be any probability measure on $\mathcal
{X}$ such that $\mathbb{Q}\ll \mathbb{P}_\xi$. Define
\begin{equation}\label{eq17}
\tilde{\mathbb{Q}}:=\frac{\d \mathbb{Q}}{\d
\mathbb{P}_\xi}(X(\cdot,\xi))\mathbb{P},
\end{equation}
which is a probability measure on $(\Omega,\mathcal {F})$. Recalling
the definition of ``entropy'' and adopting a measure-transformation
argument we obtain from \eqref{eq17} that
\begin{equation*}
\begin{split}
\mbox{\bf H}(\tilde{\mathbb{Q}}|\mathbb{P})&=\int_\Omega\ln
\Big(\frac{\d \tilde{\mathbb{Q}}}{\d \mathbb{P}}\Big)\d
\tilde{\mathbb{Q}}=\int_\Omega\ln \Big(\frac{\d \mathbb{Q}}{\d
\mathbb{P}_\xi}(X(\cdot,\xi))\Big)\frac{\d \mathbb{Q}}{\d
\mathbb{P}_\xi}(X(\cdot,\xi))\d \mathbb{P}\\
&=\int_{\mathcal {X}}\ln \Big(\frac{\d \mathbb{Q}}{\d
\mathbb{P}_\xi}\Big)\frac{\d \mathbb{Q}}{\d \mathbb{P}_\xi}\d
\mathbb{P}_\xi\\
&=\mbox{\bf H}(\mathbb{Q}|\mathbb{P}_\xi).
\end{split}
\end{equation*}
Additionally, by the martingale representation theorem, there exists
by, e.g., \cite{DGW04,WZ04}, a predictable process
$h\in\mathbb{R}^m$ with $\int_0^t|h(s)|^2\d s<\infty$ for
$t\in[0,N]$, $\mathbb{P}$-\mbox{a.s.}, such that
\begin{equation}\label{eq4}
\mbox{\bf H}(\tilde{\mathbb{Q}}|\mathbb{P})=\mbox{\bf
H}(\mathbb{Q}|\mathbb{P}_\xi)=\frac{1}{2}\mathbb{E}^{\tilde{\mathbb{Q}}}\int_0^N|h(t)|^2\d t.
\end{equation}
Furthermore, due to the Girsanov theorem,
\begin{equation*}
\tilde{W}(t):=W(t)-\int_0^th(s)\d s,\ \ \ t\in[0,N],
\end{equation*}
is a Brownian motion with respect to $\{\mathcal {F}_t\}_{t\geq0}$
on the probability space $(\Omega,\mathcal {F},\tilde{\mathbb{Q}})$.
Then,  under the measure $\tilde{\mathbb{Q}}$, the process
$\{X(t,\xi)\}_{t\in[0,N]}$ satisfies
\begin{equation}\label{eq2}
\begin{cases}
\d[X(t)-G(X_t)]=[b(X_t)+\sigma(X_t)h(t)]\d t+\sigma(X_t)\d \tilde{W}(t),\ \ \ t\in[0,N],\\
X_0=\xi.
\end{cases}
\end{equation}
Observe that, to show $\mathbb{P}_\xi\in T_2(C)$ for some $C>0$, we
need to bound the Wasserstein distance between $\mathbb{P}_\xi$ and
$\mathbb{Q}$, and couple the solutions of Eq. \eqref{eq1} and Eq.
\eqref{eq2}. Let $\{Y(t,\xi)\}_{t\in[0,N]}$ be the solution of the
following equation
\begin{equation}\label{eq3}
\begin{cases}
\d[Y(t)-G(Y_t)]=b(Y_t)\d t+\sigma(Y_t)\d \tilde{W}(t),\ \ \ t\in[0,N],\\
Y_0=\xi.
\end{cases}
\end{equation}
By virtue of the uniqueness, under $\tilde{\mathbb{Q}}$ the law of
$Y(\cdot,\xi)$ is $\mathbb{P}_\xi$. Consequently $(X,Y)$ under
$\tilde{\mathbb{Q}}$ is a coupling of $(\mathbb{Q},\mathbb{P}_\xi)$
and, by the definition of ``$L^2$-Wasserstein distance'', we have
\begin{equation}\label{eq18}
(W_{2,\d}(\mathbb{Q},\mathbb{P}_\xi))^2\leq\mathbb{E}^{\tilde{\mathbb{Q}}}(\d_{\infty,1}^2(X,Y)).
\end{equation}
Thus, by \eqref{eq4} and \eqref{eq18},  to obtain the desired
Talagrand's inequality, it suffices to estimate the
$\d_{\infty,1}$-distance between $X$ and $Y$, which is bounded by
$\mathbb{E}^{\tilde{\mathbb{Q}}}\int_0^N|h(t)|^2\d t$ up to some
constant. That is, we only need to verify that for $t\in[0,N]$
\begin{equation}\label{eq13}
\mathbb{E}^{\tilde{\mathbb{Q}}}(\d_{\infty,1}^2(X,Y))=\sum_{k=0}^{N-1}\mathbb{E}^{\tilde{\mathbb{Q}}}\Big(\sup_{k\leq
t\leq k+1}|X(t)-Y(t)|^2\Big)\leq
C\mathbb{E}^{\tilde{\mathbb{Q}}}\int_0^N|h(t)|^2\d t.
\end{equation}
Recall a fundamental inequality: for any $a,b>0$ and
$\epsilon\in(0,1)$,
\begin{equation}\label{eq6}
(a+b)^2\leq a^2/\epsilon+b^2/(1-\epsilon).
\end{equation}
This, together with \eqref{eq7}, yields that
\begin{equation*}
\begin{split}
\mathbb{E}^{\tilde{\mathbb{Q}}}|X(t)-Y(t)|^2&\leq\frac{1}{1-\sqrt{\kappa}}\mathbb{E}^{\tilde{\mathbb{Q}}}|M(t)|^2+\sqrt{\kappa}\int_{-\tau}^0w_1(\theta)\mathbb{E}^{\tilde{\mathbb{Q}}}|X(t+\theta)-Y(t+\theta)|^2\d\theta,
\end{split}
\end{equation*}
where $M(t):=X(t)-G(X_t)-(Y(t)-G(Y_t)),t\in[0,N]$. Due to
$w_1\in\mathcal {W}([-\tau,0];\mathbb{R}_+)$ and
$X(\theta)=Y(\theta)$ for $\theta\in[-\tau,0]$, it is easy to note
that
\begin{equation}\label{eq12}
\begin{split}
\sup_{0\leq s\leq
t}\mathbb{E}^{\tilde{\mathbb{Q}}}|X(s)-Y(s)|^2&\leq\frac{1}{(1-\sqrt{\kappa})^2}\sup_{0\leq
s\leq t}\mathbb{E}^{\tilde{\mathbb{Q}}}|M(s)|^2,\ \ \ t\in[0,N].
\end{split}
\end{equation}
Furthermore,  by the It\^o formula and the Young inequality,
together with \eqref{eq7}, \eqref{eq8}, the boundedness of $\sigma$
and  $w_1,w_2\in\mathcal {W}([-\tau,0];\mathbb{R}_+)$, we obtain
that for  arbitrary $\epsilon\in(0,1)$
\begin{equation}\label{eq34}
\begin{split}
&\mathbb{E}^{\tilde{\mathbb{Q}}}|M(t)|^2\\&\leq-\lambda_1\mathbb{E}^{\tilde{\mathbb{Q}}}\int_0^t|X(s)-Y(s)|^2\d
s+\lambda_2\mathbb{E}^{\tilde{\mathbb{Q}}}\int_0^t\int_{-\tau}^0w_2(\theta)|X(s+\theta)-Y(s+\theta)|^2\d
\theta\d
s\\
&\quad+2\tilde{\sigma}\mathbb{E}^{\tilde{\mathbb{Q}}}\int_0^t|M(s)||h(s)|\d s\\
&\leq-\lambda_1\mathbb{E}^{\tilde{\mathbb{Q}}}\int_0^t|X(s)-Y(s)|^2\d
s+\lambda_2\mathbb{E}^{\tilde{\mathbb{Q}}}\int_{-\tau}^0w_2(\theta)\int_0^t|X(s)-Y(s)|^2\d
s\d
\theta\\
&\quad+\mathbb{E}^{\tilde{\mathbb{Q}}}\int_0^t\Big\{\frac{\epsilon}{1+\kappa}\Big(|X(s)-Y(s)|^2+\kappa\int_{-\tau}^0w_1(\theta)|X(s+\theta)-Y(s+\theta)|^2\d
\theta\Big)\\
&\quad+\frac{2(1+\kappa)\tilde{\sigma}^2}{\epsilon}|h(s)|^2\Big\}\d s\\
&\leq-\tilde{\lambda}\mathbb{E}^{\tilde{\mathbb{Q}}}\int_0^t|X(s)-Y(s)|^2\d
s+\frac{2(1+\kappa)\tilde{\sigma}^2}{\epsilon}\int_0^t\mathbb{E}^{\tilde{\mathbb{Q}}}|h(s)|^2\d
s, \ \ \ t\in[0,N],
\end{split}
\end{equation}
where $\tilde{\lambda}:=\lambda_1-\lambda_2-\epsilon$, and   we have
also used the fact that the stochastic integral has zero expectation
under $\tilde{\mathbb{Q}}$ by a standard stopping time trick. Choose $\epsilon\in(0,1)$ sufficiently small such that
$\tilde{\lambda}>0$  and note that
$t\in[0,N]$,
\begin{equation}\label{eq45}
\sup\limits_{0\leq s\leq
t}\mathbb{E}^{\tilde{\mathbb{Q}}}|M(s)|+\tilde{\lambda}\mathbb{E}^{\tilde{\mathbb{Q}}}\int_0^t|X(s)-Y(s)|^2\d
s
\le 2\sup\limits_{0\leq s\leq
t}\mathbb{E}^{\tilde{\mathbb{Q}}}\Big\{|M(s)|+\int_0^s|X(r)-Y(r)|^2\d
r \Big\} 
\end{equation}
By \eqref{eq12} and \eqref{eq34} we have
\begin{equation*}
\mathbb{E}^{\tilde{\mathbb{Q}}}|X(t)-Y(t)|^2\leq-\beta_1\mathbb{E}^{\tilde{\mathbb{Q}}}\int_0^t|X(s)-Y(s)|^2\d
s+\beta_2\int_0^t\mathbb{E}^{\tilde{\mathbb{Q}}}|h(s)|^2\d s,\ \ \
t\in[0,N],
\end{equation*}
where $\beta_1:=\tilde{\lambda}/(1-\sqrt{\kappa})^2>0$ and
$\beta_2:=4(1+\kappa)\tilde{\sigma}^2/(\epsilon(1-\sqrt{\kappa})^2)$.
This, along with the Gronwall inequality, yields that
\begin{equation}\label{eq35}
\mathbb{E}^{\tilde{\mathbb{Q}}}|X(t)-Y(t)|^2\leq\beta_2\int_0^t
e^{-\beta_1(t-s)}\mathbb{E}^{\tilde{\mathbb{Q}}}|h(s)|^2\d s, \ \ \
t\in[0,N].
\end{equation}
Also by the It\^o formula and the Young inequality we can deduce
from \eqref{eq7} and  \eqref{eq8} that for any  $w_2\in\mathcal
{W}([-\tau,0];\mathbb{R}_+),$  $\epsilon>0$ and
$t\in[k,k+1),k=0,1,\cdots, N-1$,
\begin{equation*}
\begin{split}
|M(t)|^2 &\leq |M(k)|^2-\lambda_1\int_k^t|X(s)-Y(s)|^2\d
s+\lambda_2\int_{-\tau}^0w_2(\theta)\int_k^t|X(s+\theta)-Y(s+\theta)|^2\d
s\d
\theta\\
&\quad+2\tilde{\sigma}\int_k^t|M(s)||h(s)|\d s+2\int_k^t\langle
M(s),\sigma(X_s)-\sigma(Y_s)\d \tilde{W}(s)\rangle\\
&\leq
|M(k)|^2+(\lambda_2+2\kappa\epsilon)\int_{k-\tau}^k|X(s)-Y(s)|^2\d
s+C_1\int_k^t|X(s)-Y(s)|^2\d
s\\
&\quad+\frac{\tilde{\sigma}^2}{\epsilon}\int_k^t|h(s)|^2\d
s+2\int_k^t\langle M(s),\sigma(X_s)-\sigma(Y_s)\d
\tilde{W}(s)\rangle,
\end{split}
\end{equation*}
where $C_1:=-\lambda_1+\lambda_2+2(1+\kappa)\epsilon$. Choosing
$\epsilon>0$ such that $C_1>0$ we have
\begin{equation}\label{eq36}
\begin{split}
\mathbb{E}^{\tilde{\mathbb{Q}}}\Big(\sup_{k\leq t\leq
k+1}|M(t)|^2\Big)&\leq
\mathbb{E}^{\tilde{\mathbb{Q}}}|M(k)|^2+(\lambda_2+2\kappa\epsilon)\mathbb{E}^{\tilde{\mathbb{Q}}}\int_{k-\tau}^k|X(s)-Y(s)|^2\d
s\\
&\quad+C_1\mathbb{E}^{\tilde{\mathbb{Q}}}\int_k^{k+1}|X(s)-Y(s)|^2\d
s\\
&\quad+\frac{\tilde{\sigma}^2}{\epsilon}\mathbb{E}^{\tilde{\mathbb{Q}}}\int_k^{k+1}|h(s)|^2\d
s+J(t),
\end{split}
\end{equation}
where
\begin{equation*}
 J(t):=2\mathbb{E}^{\tilde{\mathbb{Q}}}\Big(\sup_{k\leq t\leq
k+1}\Big|\int_k^t\langle M(s),\sigma(X_s)-\sigma(Y_s)\d
\tilde{W}(s)\rangle\Big|\Big).
\end{equation*}
By the Burkhold-Davis-Gundy inequality and the Young inequality,
together with \eqref{eq38}, it then follows that
\begin{equation}\label{eq42}
\begin{split}
J(t)&\leq6\mathbb{E}^{\tilde{\mathbb{Q}}}\Big(\int_k^{k+1}|
M(s)|^2\|\sigma(X_s)-\sigma(Y_s)\|^2_{HS}\d s\Big)^{1/2}\\
&\leq\frac{1}{2}\mathbb{E}^{\tilde{\mathbb{Q}}}\Big(\sup_{k\leq
t\leq
k+1}|M(t)|^2\Big)+9\lambda_3\mathbb{E}^{\tilde{\mathbb{Q}}}\int_k^{k+1}\int_{-\tau}^0|X(s+\theta)-Y(s+\theta)|^2\d
\theta\d
s\\
&\leq\frac{1}{2}\mathbb{E}^{\tilde{\mathbb{Q}}}\Big(\sup_{k\leq
t\leq
k+1}|M(t)|^2\Big)+9\lambda_3\tau\mathbb{E}^{\tilde{\mathbb{Q}}}\int_{k-\tau}^{k+1}|X(s)-Y(s)|^2\d
s.
\end{split}
\end{equation}
Thus from \eqref{eq36} one has
\begin{equation}\label{eq37}
\begin{split}
\mathbb{E}^{\tilde{\mathbb{Q}}}\Big(\sup_{k\leq t\leq
k+1}|M(t)|^2\Big)&\leq
2\mathbb{E}^{\tilde{\mathbb{Q}}}|M(k)|^2+2(\lambda_2+2\kappa\epsilon)\mathbb{E}^{\tilde{\mathbb{Q}}}\int_{k-\tau}^k|X(s)-Y(s)|^2\d
s\\
&\quad+2C_1\mathbb{E}^{\tilde{\mathbb{Q}}}\int_k^{k+1}|X(s)-Y(s)|^2\d
s+\frac{\tilde{2\sigma}^2}{\epsilon}\mathbb{E}^{\tilde{\mathbb{Q}}}\int_k^{k+1}|h(s)|^2\d s\\
&\quad+18\lambda_3\tau\mathbb{E}^{\tilde{\mathbb{Q}}}\int_{k-\tau}^{k+1}|X(s)-Y(s)|^2\d
s.
\end{split}
\end{equation}
Moreover note from \eqref{eq6} that
\begin{equation*}
\begin{split}
\mathbb{E}^{\tilde{\mathbb{Q}}}\Big(\sup_{k\leq t\leq
k+1}|X(t)-Y(t)|^2\Big)
&\leq\frac{1}{(1-\sqrt{\kappa})^2}\mathbb{E}^{\tilde{\mathbb{Q}}}\Big(\sup_{k\leq
t\leq
k+1}|M(t)|^2\Big)\\
&\quad+\frac{\sqrt{\kappa}}{1-\sqrt{\kappa}}\mathbb{E}^{\tilde{\mathbb{Q}}}\Big(\sup_{k-\tau\leq
t\leq k}|X(t)-Y(t)|^2\Big).
\end{split}
\end{equation*}
 On the other hand, by \eqref{eq7}
\begin{equation*}
\begin{split}
\mathbb{E}^{\tilde{\mathbb{Q}}}
|M(k)|^2\leq2\mathbb{E}^{\tilde{\mathbb{Q}}}|X(k)-Y(k)|^2+2\int_{-\tau}^0w_1(\theta)\mathbb{E}^{\tilde{\mathbb{Q}}}|X(k+\theta)-Y(k+\theta)|^2\d\theta
\end{split}
\end{equation*}
As a result, due to  \eqref{eq37}  we obtain that
\begin{equation*}
\begin{split}
\mathbb{E}^{\tilde{\mathbb{Q}}}\Big(\sup_{k\leq t\leq
k+1}|X(t)-Y(t)|^2\Big)&\leq
C\Big\{\mathbb{E}^{\tilde{\mathbb{Q}}}\Big(\sup_{k-\tau\leq t\leq
k}|X(t)-Y(t)|^2\Big)+\mathbb{E}^{\tilde{\mathbb{Q}}}\int_k^{k+1}|h(s)|^2\d
s\\
&\quad+\mathbb{E}^{\tilde{\mathbb{Q}}}|X(k)-Y(k)|^2+\mathbb{E}^{\tilde{\mathbb{Q}}}\int_{k-\tau}^k|X(s)-Y(s)|^2\d
s\\
&\quad+\int_{-\tau}^0w_1(\theta)\mathbb{E}^{\tilde{\mathbb{Q}}}|X(k+\theta)-Y(k+\theta)|^2\d\theta\Big\}\\
&\quad+C\mathbb{E}^{\tilde{\mathbb{Q}}}\int_k^{k+1}|X(s)-Y(s)|^2\d
s.
\end{split}
\end{equation*}
An application of the Gronwall inequality further implies that
\begin{equation*}
\begin{split}
\mathbb{E}^{\tilde{\mathbb{Q}}}\Big(\sup_{k\leq t\leq
k+1}|X(t)-Y(t)|^2\Big)&\leq
C\Big\{\mathbb{E}^{\tilde{\mathbb{Q}}}\Big(\sup_{k-\tau\leq t\leq
k}|X(t)-Y(t)|^2\Big)+\mathbb{E}^{\tilde{\mathbb{Q}}}\int_k^{k+1}|h(s)|^2\d
s\\
&\quad+\mathbb{E}^{\tilde{\mathbb{Q}}}
|X(k)-Y(k)|^2+\mathbb{E}^{\tilde{\mathbb{Q}}}\int_{k-\tau}^k|X(s)-Y(s)|^2\d
s\\
&\quad+\int_{-\tau}^0w_1(\theta)\mathbb{E}^{\tilde{\mathbb{Q}}}|X(k+\theta)-Y(k+\theta)|^2\d
\theta\Big\}.
\end{split}
\end{equation*}
Thus for any $1\leq M\leq N$ one has
\begin{equation*}
\begin{split}
\sum_{k=0}^{M-1}\mathbb{E}^{\tilde{\mathbb{Q}}}\Big(\sup_{k\leq
t\leq k+1}|X(t)-Y(t)|^2\Big)&\leq
C\Big\{\sum_{k=0}^{M-1}\mathbb{E}^{\tilde{\mathbb{Q}}}\Big(\sup_{k-\tau\leq
t\leq
k}|X(t)-Y(t)|^2\Big)+\mathbb{E}^{\tilde{\mathbb{Q}}}\int_0^M|h(s)|^2\d
s\\
&\quad+\sum_{k=0}^{M-1}\mathbb{E}^{\tilde{\mathbb{Q}}}
|X(k)-Y(k)|^2+\sum_{k=0}^{M-1}\mathbb{E}^{\tilde{\mathbb{Q}}}\int_{k-\tau}^k|X(s)-Y(s)|^2\d
s\\
&\quad+\int_{-\tau}^0w_1(\theta)\sum_{k=0}^{M-1}\mathbb{E}^{\tilde{\mathbb{Q}}}|X(k+\theta)-Y(k+\theta)|^2\d
\theta\Big\}.
\end{split}
\end{equation*}
Note from \eqref{eq35} that
\begin{equation*}
\begin{split}
\sum_{k=0}^{M-1}\mathbb{E}^{\tilde{\mathbb{Q}}}\int_{k-\tau}^k|X(s)-Y(s)|^2\d
s &\leq
\beta_2\sum_{k=0}^{M-1}\int_{k-\tau}^k\int_0^se^{-\beta_1(s-r)}\mathbb{E}^{\tilde{\mathbb{Q}}}|h(r)|^2\d
r\d s\\
 &=
\beta_2\sum_{k=0}^{M-1}\int_{k-\tau}^k\int_0^se^{-\beta_1(k-r)}e^{-\beta_1(s-k)}\mathbb{E}^{\tilde{\mathbb{Q}}}|h(r)|^2\d
r\d s\\
&\leq
\beta_2e^{\beta_1\tau}\tau\sum_{k=0}^{M-1}\int_0^ke^{-\beta_1(k-s)}\mathbb{E}^{\tilde{\mathbb{Q}}}|h(s)|^2\d
s,
\end{split}
\end{equation*}
and due to $w_1\in\mathcal {W}([-\tau,0];\mathbb{R}_+)$
\begin{equation*}
\begin{split}
&\int_{-\tau}^0w_1(\theta)\sum_{k=0}^{M-1}\mathbb{E}^{\tilde{\mathbb{Q}}}|X(k+\theta)-Y(k+\theta)|^2\d
\theta\\
&\leq
\beta_2\int_{-\tau}^0w_1(\theta)\sum_{k=0}^{M-1}\int_0^{k+\theta}
e^{-\beta_1(k+\theta-s)}\mathbb{E}^{\tilde{\mathbb{Q}}}|h(s)|^2\d
s\d \theta\\
&\leq \beta_2e^{\beta_1\tau}\sum_{k=0}^{M-1}\int_0^{k}
e^{-\beta_1(k-s)}\mathbb{E}^{\tilde{\mathbb{Q}}}|h(s)|^2\d s.
\end{split}
\end{equation*}
Then, carrying out a similar procedure to that of
\cite[\mbox{Inquality} (3.6), p363]{WZ04}, we can derive that for
any $1\leq M\leq N$
\begin{equation*}
\sum_{k=0}^{M-1}\mathbb{E}^{\tilde{\mathbb{Q}}}\Big(\sup_{k\leq
t\leq k+1}|X(t)-Y(t)|^2\Big)\leq
C\Big\{\sum_{k=0}^{M-2}\mathbb{E}^{\tilde{\mathbb{Q}}}\Big(\sup_{k\leq
t\leq
k+1}|X(t)-Y(t)|^2\Big)+\mathbb{E}^{\tilde{\mathbb{Q}}}\int_0^N|h(s)|^2\d
s\Big\},
\end{equation*}
and  the claim \eqref{eq13} follows by a deduction argument.

({\bf2}) Observing the proceeding proof we can also deduce that
\begin{equation*}
\mathbb{E}^{\tilde{\mathbb{Q}}}|X(t)-Y(t)|^2 \leq
C\int_0^te^{C(t-s)}\mathbb{E}^{\tilde{\mathbb{Q}}}|h(s)|^2\d s, \ \
\ t\in[0,T].
\end{equation*}
By changing the  integral order it follows that
\begin{equation*}
\begin{split}
\int_0^T\mathbb{E}^{\tilde{\mathbb{Q}}}|X(t)-Y(t)|^2 \d t&\leq
C\int_0^T\int_0^te^{C(t-s)}\mathbb{E}^{\tilde{\mathbb{Q}}}|h(s)|^2\d
s\d t\\
&=C\int_0^T\mathbb{E}^{\tilde{\mathbb{Q}}}|h(s)|^2\int_s^Te^{c_3(t-s)}\d
t\d s\\
&\leq C\int_0^T\mathbb{E}^{\tilde{\mathbb{Q}}}|h(s)|^2\d s,
\end{split}
\end{equation*}
and then $\mathbb{P}_\xi\in T_2(C)$ for some $C>0$ under the metric
$\d_{L^2}$.  The proof is therefore complete.
\end{proof}

\begin{rem}
{\rm The processes $X(t)$ we consider throughout the paper is not a
Markov process since $G,b$ and $\sigma$ are dependent on the past
history of the solution process.}
\end{rem}

Observing the argument of Theorem \ref{Theorem 1} we can also obtain
the following result.
\begin{thm}\label{Theorem 4}
{\rm Let the conditions in Theorem \ref{Theorem 1} hold. Then
$\mathbb{P}_\xi\in T_2(C)$ for some constant $C>0$, which is
dependent only on $\kappa,\lambda_i,i=1,2,3,\tilde{\sigma}$ and the
universal constant from the Burkhold-Davis-Gundy inequality, under
uniform metric
\begin{equation*}
\d_{\infty,2}(\gamma_1,\gamma_2):=\sup_{0\leq t\leq
T}|\gamma_1(t)-\gamma_2(t)|,\ \ \ \gamma_1,\gamma_2\in\mathcal {X}.
\end{equation*}

}
\end{thm}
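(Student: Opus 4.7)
The plan is to recycle the Girsanov/coupling scheme from the proof of Theorem \ref{Theorem 1}, but avoid the summation-over-unit-intervals step and instead apply the Burkhold-Davis-Gundy inequality directly on the whole interval $[0,T]$, then absorb the resulting $\int_0^T \mathbb{E}^{\tilde{\mathbb{Q}}}|X(s)-Y(s)|^2\,\d s$ by means of the exponential-in-time bound \eqref{eq35}. The payoff is that neither step introduces a factor depending on $T$, so the final constant $C$ depends only on the ``structural'' parameters $\kappa,\lambda_i,\tilde{\sigma}$ and the BDG constant.

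First I would repeat verbatim the Girsanov construction of part (\textbf{1}) of the proof of Theorem \ref{Theorem 1}: for $\mathbb{Q}\ll\mathbb{P}_\xi$ define $\tilde{\mathbb{Q}}$ by \eqref{eq17}, obtain the identity \eqref{eq4}, introduce the coupling $(X,Y)$ with $X$ solving \eqref{eq2} and $Y$ solving \eqref{eq3} under $\tilde{\mathbb{Q}}$, and note that $Y$ has law $\mathbb{P}_\xi$, so that
\begin{equation*}
W_{2,\d_{\infty,2}}^{2}(\mathbb{Q},\mathbb{P}_\xi)\le \mathbb{E}^{\tilde{\mathbb{Q}}}\Big(\sup_{0\le t\le T}|X(t)-Y(t)|^{2}\Big).
\end{equation*}
It therefore suffices to prove that the right-hand side is bounded by a multiple of $\mathbb{E}^{\tilde{\mathbb{Q}}}\int_0^T|h(s)|^{2}\,\d s$ with constant independent of $T$.

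Next I would apply Itô's formula to $|M(t)|^{2}$ with $M(t):=X(t)-G(X_t)-(Y(t)-G(Y_t))$, exactly as in \eqref{eq36}, but now take the supremum over $t\in[0,T]$ before taking expectation. Using \eqref{eq8}, the boundedness of $\sigma$, Young's inequality, \eqref{eq38}, and the Burkhold-Davis-Gundy inequality (which, combined with Young, absorbs a $\tfrac12 \mathbb{E}^{\tilde{\mathbb{Q}}}\sup_{[0,T]}|M|^{2}$ into the left-hand side just as in \eqref{eq42}), I expect to obtain
\begin{equation*}
\mathbb{E}^{\tilde{\mathbb{Q}}}\Big(\sup_{0\le t\le T}|M(t)|^{2}\Big)\le C_{1}\,\mathbb{E}^{\tilde{\mathbb{Q}}}\int_{0}^{T}|X(s)-Y(s)|^{2}\,\d s+C_{2}\,\mathbb{E}^{\tilde{\mathbb{Q}}}\int_{0}^{T}|h(s)|^{2}\,\d s
\end{equation*}
with $C_{1},C_{2}$ depending only on $\kappa,\lambda_i,\tilde{\sigma},\lambda_3,\tau$ and the BDG constant. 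Combining with \eqref{eq6}, \eqref{eq7} and $X\equiv Y$ on $[-\tau,0]$ (as in \eqref{eq12}), I pass from $\sup|M|^{2}$ to $\sup|X-Y|^{2}$ at the cost of the factor $(1-\sqrt{\kappa})^{-2}$.

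The decisive step is then to handle the time-integral term $\int_0^T \mathbb{E}^{\tilde{\mathbb{Q}}}|X(s)-Y(s)|^{2}\,\d s$ without picking up a $T$-dependence. This is where \eqref{eq35} from Theorem \ref{Theorem 1} is crucial: since $\beta_1=\tilde{\lambda}/(1-\sqrt{\kappa})^{2}>0$ under the standing hypothesis $\lambda_1-\lambda_2>0$, Fubini gives
\begin{equation*}
\int_{0}^{T}\mathbb{E}^{\tilde{\mathbb{Q}}}|X(s)-Y(s)|^{2}\,\d s\le \beta_{2}\int_{0}^{T}\!\!\int_{0}^{s}e^{-\beta_{1}(s-r)}\mathbb{E}^{\tilde{\mathbb{Q}}}|h(r)|^{2}\,\d r\,\d s\le \frac{\beta_{2}}{\beta_{1}}\,\mathbb{E}^{\tilde{\mathbb{Q}}}\int_{0}^{T}|h(s)|^{2}\,\d s,
\end{equation*}
a bound with constant independent of $T$. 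Feeding this back completes the proof and identifies the promised dependence of $C$. The main obstacle is book-keeping the BDG/Young splitting so that the coefficient multiplying $\sup_{[0,T]}|M|^{2}$ is strictly smaller than one (ensuring absorption) while the remaining coefficients remain under $T$-free control; once this is done, the exponential decay in \eqref{eq35} turns the standard Gronwall loss into a harmless geometric series.
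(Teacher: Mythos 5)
Your plan is correct and is essentially the argument the paper intends (the paper states Theorem \ref{Theorem 4} as a direct byproduct of the proof of Theorem \ref{Theorem 1} without writing out details): run the Girsanov coupling globally on $[0,T]$, estimate $\mathbb{E}^{\tilde{\mathbb{Q}}}\sup_{[0,T]}|M(t)|^2$ via It\^o, Young, \eqref{eq38} and Burkhold--Davis--Gundy with the usual absorption of $\tfrac12\mathbb{E}^{\tilde{\mathbb{Q}}}\sup|M|^2$, pass to $\sup|X-Y|^2$ through \eqref{eq6}--\eqref{eq7}, and kill the $T$-dependence of $\int_0^T\mathbb{E}^{\tilde{\mathbb{Q}}}|X(s)-Y(s)|^2\,\d s$ by integrating the exponential bound \eqref{eq35}, which is exactly where $\lambda_1-\lambda_2>0$ is used. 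The only caveat worth recording is that the constant also carries the harmless dependence on $\tau$ (through $\lambda_3\tau$ in the BDG step) and on the Young parameter $\epsilon$, as in Theorem \ref{Theorem 1}.
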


 In what follows we shall show $\mathbb{P}_\xi\in T_2(C)$ for some $C>0$ with respect
to  the uniform metric $\d_{\infty,2}$  under weaker conditions than
\eqref{eq7}-\eqref{eq8}.  For $ \xi\in\mathscr{C}$, denote  $\|\xi\|_\8=  \sup_{-\tau\leq\theta\leq0}|\xi(\theta)|.$ Assume that $G(0)=0$ and there exists
$\kappa\in(0,1)$ such that
\begin{equation}\label{eq14}
|G(\xi)-G(\eta)|\leq\kappa\|\xi-\eta\|_\infty, \ \ \ \xi,\eta\in\mathscr{C}.
\end{equation}
Assume also that $b$ and $\sigma$ are locally Lipschizian and there
exists $\tilde \lambda_1, \tilde\lambda_2\geq0$ such that for arbitrary
$\xi,\eta\in\mathscr{C}$
\begin{equation}\label{eq15}
2\langle\xi(0)-G(\xi),b(\xi)\rangle+\|\sigma(\xi)\|_{HS}^2
\leq\tilde\lambda_1(1+\|\xi\|^2_\infty)
\end{equation}
and
\begin{equation}\label{eq16}
2\langle\xi(0)-G(\xi)-(\eta(0)-G(\eta)),b(\xi)-b(\eta)\rangle+\|\sigma(\xi)-\sigma(\eta)\|_{HS}^2
\leq\tilde\lambda_2\|\xi-\eta\|^2_\infty.
\end{equation}

\begin{thm}\label{Theorem 2}
{\rm Let \eqref{eq14}-\eqref{eq16} hold. Assume further that there
exists $\tilde\lambda_3\geq0$ such that
\begin{equation}\label{eq39}
\|\sigma(\xi)-\sigma(\eta)\|_{HS}^2\leq\tilde\lambda_3\|\xi(\theta)-\eta(\theta)\|^2_\infty,
\ \ \ \xi,\eta\in\mathscr{C},
\end{equation}
and $\sigma$ is bounded by
$\tilde{\sigma}:=\sup_{\xi\in\mathscr{C}}\|\sigma(\xi)\|_{HS}<\infty$.
Then  $\mathbb{P}_\xi\in T_2(C)$ for some $C>0$ on the metric space
$\mathcal {X}$ with respect to the metric $\d_{\infty,2}$. }
\end{thm}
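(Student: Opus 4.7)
The plan is to mirror Part (\textbf{1}) of the proof of Theorem~\ref{Theorem 1}, but with all $L^2$-in-time bounds replaced by uniform-in-time bounds suited to the metric $\d_{\infty,2}$. The Girsanov apparatus is reused verbatim: for any $\mathbb{Q}\ll \mathbb{P}_\xi$ on $\mathcal{X}$, set $\tilde{\mathbb{Q}}:=(\d\mathbb{Q}/\d\mathbb{P}_\xi)(X(\cdot,\xi))\mathbb{P}$, invoke the martingale representation theorem to produce a predictable $h$ with $\mathbf{H}(\mathbb{Q}|\mathbb{P}_\xi)=\tfrac{1}{2}\mathbb{E}^{\tilde{\mathbb{Q}}}\int_0^T|h(t)|^2\,\d t$, and let $Y$ solve the original neutral SDE driven by the shifted Brownian motion $\tilde W := W-\int_0^\cdot h(s)\,\d s$. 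Then $(X,Y)$ under $\tilde{\mathbb{Q}}$ is a coupling of $(\mathbb{Q},\mathbb{P}_\xi)$, and the theorem reduces to establishing
\[
\mathbb{E}^{\tilde{\mathbb{Q}}}\Big(\sup_{0\le t\le T}|X(t)-Y(t)|^2\Big)\le C\,\mathbb{E}^{\tilde{\mathbb{Q}}}\int_0^T|h(t)|^2\,\d t.
\]

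The core analytic step applies It\^o's formula to $|M(t)|^2$ with $M(t):=X(t)-G(X_t)-(Y(t)-G(Y_t))$. Using \eqref{eq16} to control the drift-plus-quadratic-variation contribution by $\tilde\lambda_2\|X_t-Y_t\|_\infty^2$, the cross term $2\langle M,\sigma(X_t)h(t)\rangle$ by $2\tilde\sigma|M||h|$ and Young's inequality, and the stochastic integral by the Burkholder--Davis--Gundy inequality together with \eqref{eq39} and Young's inequality, after absorbing $\tfrac12\,\mathbb{E}^{\tilde{\mathbb{Q}}}\sup_{s\le t}|M(s)|^2$ on the left-hand side I expect to obtain
\[
\mathbb{E}^{\tilde{\mathbb{Q}}}\Big(\sup_{0\le s\le t}|M(s)|^2\Big)
\le C_1\int_0^t \mathbb{E}^{\tilde{\mathbb{Q}}}\Big(\sup_{0\le r\le s}\|X_r-Y_r\|_\infty^2\Big)\d s
+ C_2\,\mathbb{E}^{\tilde{\mathbb{Q}}}\int_0^t|h(s)|^2\,\d s.
\]

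The neutral term is eliminated through the genuinely uniform contraction \eqref{eq14}: writing $X(t)-Y(t)=M(t)+G(X_t)-G(Y_t)$, applying \eqref{eq14} and taking $\sup_{0\le s\le t}$, and using that $X=Y$ on $[-\tau,0]$ implies $\sup_{0\le s\le t}\|X_s-Y_s\|_\infty=\sup_{0\le u\le t}|X(u)-Y(u)|$, I arrive at
\[
\sup_{0\le s\le t}|X(s)-Y(s)| \le \frac{1}{1-\kappa}\sup_{0\le s\le t}|M(s)|.
\]
Squaring, taking expectation, and inserting the $M$-estimate produce a linear integral inequality of the form $\phi(t)\le C_3\int_0^t\phi(s)\,\d s+C_4\,\mathbb{E}^{\tilde{\mathbb{Q}}}\int_0^T|h(s)|^2\,\d s$ for $\phi(t):=\mathbb{E}^{\tilde{\mathbb{Q}}}\sup_{0\le s\le t}|X(s)-Y(s)|^2$. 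Gronwall's lemma then delivers the required bound, and combining with the entropy identity gives $\mathbb{P}_\xi\in T_2(C)$ on $(\mathcal{X},\d_{\infty,2})$.

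The only delicate step I anticipate is the BDG-induced absorption of $\tfrac12\mathbb{E}^{\tilde{\mathbb{Q}}}\sup|M|^2$: before this quantity is known to be finite, the manoeuvre is only formal. I will handle it exactly as in Theorem~\ref{Theorem 1}, via a localizing sequence of stopping times (whose global applicability is guaranteed by the non-explosion condition \eqref{eq15}) followed by Fatou's lemma.
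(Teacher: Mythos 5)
Your proposal is correct and follows essentially the same route as the paper's proof: the identical Girsanov/coupling reduction, the It\^o--BDG--Young estimate on $|M(t)|^2$ with the $\tfrac12\sup|M|^2$ absorption, elimination of the neutral term via \eqref{eq14} to get the factor $(1-\kappa)^{-2}$, and Gronwall. The only (harmless) differences are that you remove the neutral term pathwise by the triangle inequality rather than via the weighted inequality \eqref{eq6} in expectation, and that you make explicit the stopping-time localization which the paper leaves implicit.
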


\begin{proof}
Note that, under \eqref{eq14}-\eqref{eq16}, Eq. \eqref{eq1} has a
unique solution $\{X(t,\xi)\}_{t\in[-\tau,T]}$. We also point out
that the argument is similar to that of Theorem \ref{Theorem 1},
while  a sketch of the proof is provided for the 
completeness and highlight some  differences. Let $\tilde{\mathbb{Q}}$
be defined by \eqref{eq17}. Adopting a similar procedure to that of
\eqref{eq18}, we have
\begin{equation}\label{eq5}
(W_{2,\d}(\mathbb{Q},\mathbb{P}_\xi))^2\leq\mathbb{E}^{\tilde{\mathbb{Q}}}(\d_{\infty,2}^2(X,Y)).
\end{equation}
Thus by \eqref{eq4} and \eqref{eq5} to verify that
$\mathbb{P}_\xi\in T_2(C)$ for some $C>0$ under the metric
$\d_{\infty,2}$ it is sufficient to verify that
\begin{equation}\label{eq11}
\mathbb{E}^{\tilde{\mathbb{Q}}}\Big(\sup_{0\leq t\leq
T}|X(t)-Y(t)|^2\Big)\leq
C\mathbb{E}^{\tilde{\mathbb{Q}}}\int_0^T|h(t)|^2\d t.
\end{equation}
Let $M(t):=X(t)-G(X_t)-(Y(t)-G(Y_t)),t\in[0,T]$. Recalling the
inequality \eqref{eq6} and noting that $X_0=Y_0=\xi$, we then obtain
from \eqref{eq14} that
\begin{equation*}
\begin{split}
\mathbb{E}^{\tilde{\mathbb{Q}}}\Big(\sup_{0\leq t\leq T}
|X(t)-Y(t)|^2\Big)&\leq\frac{1}{1-\kappa}\mathbb{E}^{\tilde{\mathbb{Q}}}\Big(\sup_{0\leq t\leq T}|M(t)|^2\Big)+\frac{1}{\kappa}\mathbb{E}^{\tilde{\mathbb{Q}}}\Big(\sup_{0\leq t\leq T}|G(X_t)-G(Y_t)|^2\Big)\\
&\leq\frac{1}{1-\kappa}\mathbb{E}^{\tilde{\mathbb{Q}}}\Big(\sup_{0\leq
t\leq
T}|M(t)|^2\Big)+\kappa\mathbb{E}^{\tilde{\mathbb{Q}}}\Big(\sup_{0\leq
t\leq T}|X(t)-Y(t)|^2\Big).
\end{split}
\end{equation*}
This implies that
\begin{equation}\label{eq10}
\mathbb{E}^{\tilde{\mathbb{Q}}}\Big(\sup_{0\leq t\leq
T}|X(t)-Y(t)|^2\Big)\leq\frac{1}{(1-\kappa)^2}\mathbb{E}^{\tilde{\mathbb{Q}}}\Big(\sup_{0\leq
t\leq T}|M(t)|^2\Big).
\end{equation}
 Then, applying the It\^o formula and the Burkhold-Davis-Gundy inequality, we can derive from
\eqref{eq14}, \eqref{eq16}and \eqref{eq39} that
\begin{equation*}
\begin{split}
\mathbb{E}^{\tilde{\mathbb{Q}}}\Big(\sup_{0\leq t\leq
T}|M(t)|^2\Big)
&\leq(\tilde \lambda_2+2(1+\kappa^2))\int_0^T\mathbb{E}^{\tilde{\mathbb{Q}}}\Big(\sup\limits_{0\leq
s\leq t}|X(s)-Y(s)|^2\Big)\d s+\tilde{\sigma}^2\int_0^T
\mathbb{E}^{\tilde{\mathbb{Q}}}|h(t)|^2\d t\\
&\quad+6\mathbb{E}^{\tilde{\mathbb{Q}}}\Big(\sup_{0\leq t\leq
T}|M(t)|^2\int_0^T\|\sigma(X_s)-\sigma(Y_s)\|_{HS}^2
\d s\Big)^{\frac{1}{2}}\\
&\leq\Big(\tilde\lambda_2+2(1+\kappa^2)+18\tilde\lambda_3^2\Big)\int_0^T\mathbb{E}^{\tilde{\mathbb{Q}}}\Big(\sup\limits_{0\leq
s\leq t}|X(s)-Y(s)|^2\Big)\d s\\
&\quad+\tilde{\sigma}^2\int_0^T
\mathbb{E}^{\tilde{\mathbb{Q}}}|h(t)|^2\d t
+\frac{1}{2}\mathbb{E}^{\tilde{\mathbb{Q}}}\Big(\sup_{0\leq t\leq
T}|M(t)|^2\Big),
\end{split}
\end{equation*}
where we have also used the boundedness of $\sigma$ and  the Young
inequality in the last step. It then follows that
\begin{equation*}
\mathbb{E}^{\tilde{\mathbb{Q}}}\Big(\sup_{0\leq t\leq
T}|M(t)|^2\Big)\leq
C\int_0^T\mathbb{E}^{\tilde{\mathbb{Q}}}\Big(\sup\limits_{0\leq
s\leq t}|X(s)-Y(s)|^2\Big)\d s+2\tilde{\sigma}^2\int_0^T
\mathbb{E}^{\tilde{\mathbb{Q}}}|h(t)|^2\d t.
\end{equation*}
Thus from \eqref{eq10} we get
\begin{equation*}
\begin{split}
\mathbb{E}^{\tilde{\mathbb{Q}}}\Big(\sup_{0\leq t\leq
T}|X(t)-Y(t)|^2\Big)&\leq\frac{C}{(1-\kappa)^2}\int_0^T\mathbb{E}^{\tilde{\mathbb{Q}}}\Big(\sup\limits_{0\leq
s\leq t}|X(s)-Y(s)|^2\Big)\d s\\
&\quad+\frac{2\tilde{\sigma}^2}{(1-\kappa)^2}\int_0^T
\mathbb{E}^{\tilde{\mathbb{Q}}}|h(t)|^2\d t,
\end{split}
\end{equation*}
and  due to the Gronwall inequality
\begin{equation*}
\mathbb{E}^{\tilde{\mathbb{Q}}}\Big(\sup_{0\leq t\leq
T}|X(t)-Y(t)|^2\Big)\leq C\int_0^T
\mathbb{E}^{\tilde{\mathbb{Q}}}|h(t)|^2\d t.
\end{equation*}
Consequently the statement \eqref{eq11} follows and thus
$\mathbb{P}_\xi\in T_2(C)$ holds for some $C>0$, as required.
\end{proof}

\begin{rem}
{\rm If, in \eqref{eq8}, $G\equiv0,\tau=0$, $\lambda_1>0$ and
$\lambda_2=0$, Theorem \ref{Theorem 1} ({\it under the metric
$\d_{\infty,1}$}) becomes \cite[Theorem 2.1]{WZ04}, where the
constant $C$ in  \cite[Theorem 2.1]{WZ04} is independent of time
$T=N$. While for the functional case,  the constant $C>0$ in Theorem
\ref{Theorem 1} is dependent on time $T=N$ even for $\lambda_1>0$
(in \eqref{eq8}), which demonstrate the differences between SDEs
without memory and functional SDEs. Nevertheless  {\it under the
metric} $\d_{\infty,2}$ Theorem \ref{Theorem 4} shows that the
constant $C>0$ such that TCI holds can be chosen independent of $T$.
 Moreover, following the argument of Theorem \ref{Theorem 1}, our
main result can also be generalized to the case of neutral
functional with infinity delay, which will be reported in the future
paper. }
\end{rem}

\begin{rem}
{\rm For time-inhomogenous diffusions, \"Ust\"unel \cite[Proposition
1]{U10} and Pal \cite[Theorem 5]{P11} verify that $\mathbb{P}_\xi\in
T_2(C)$ with respect to $\d_{\infty,2}$ for the dissipative case and
infinite time horizon case respectively. While in this section we
discuss the Talagrand's $T_2$-transportation inequality with respect
to two uniform metrics $\d_{\infty,1}$ and $\d_{\infty,2}$ for a
class of {\it neutral functional } SDEs, and in particular some
techniques have been developed to deal with  the difficulties caused
by the neutral term and the time-lag. }
\end{rem}

\section{TCI for Neutral Functional SPDEs}
 In this section we proceed to discuss the
TCIs for the laws of a class of neutral functional SPDEs in
infinite-dimensional setting.
 Let $(H,\langle\cdot,\cdot\rangle_H,\|\cdot\|_H)$ be a real
separable Hilbert space,  and  $(W(t))_{t\geq0}$   a cylindrical
Wiener process on $H$ with respect to  a filtered complete
probability space $(\Omega, \scr {F}, \{\scr
{F}_t\}_{t\geq0},\mathbb{P})$. Let $\scr {L}(H)$ and $\scr
{L}_{HS}(H)$ be the spaces of all linear bounded operators and
Hilbert-Schmidt operators on $H$ respectively. Denote by $\|\cdot\|$
and $\|\cdot\|_{HS}$  the operator norm and the Hilbert-Schmidt norm
respectively. Fix $\tau>0$ and let $\mathscr{C}:=\mathcal
{C}([-\tau,0]; H)$, the space of  continuous functions
$f:[-\tau,0]\mapsto H$, equipped with a uniform norm
$\|f\|_\infty:=\sup_{-\tau\leq\theta\leq0}\|f(\theta)\|_H$.

For $T>0$ consider semi-linear neutral functional SPDE on $H$
\begin{equation}\label{eq19}
\begin{cases} \d[X(t)+G(X_t)]=[AX(t)+b(X_t)]\d t+\sigma(X_t)\d
W(t),\ \ \
t\in[0,T],\\
X_0=\xi\in\mathscr{C}.
\end{cases}
\end{equation}

We assume that
\begin{enumerate}
\item[\textmd{(A1)}] $(A,\scr {D}(A))$ is a linear
operator on $H$ generating an analytic $C_0$-semigroup
$(\e^{tA})_{t\geq0}$ such that $\|e^{tA}\|\leq Me^{\nu t}$ for some
$M,\nu>0$.
\item[\textmd{(A2)}]
$b:\mathscr{C}\rightarrow H$ and there is $\rho_1>0$ such that
$\|b(\xi)-b(\eta)\|_H\leq\rho_1\|\xi(\theta)-\eta(\theta)\|_\infty,\xi,\eta\in\mathscr{C}$.
\item[\textmd{(A3)}] $ \sigma: H\rightarrow \scr {L}(H)$ such that
$e^{tA}\sigma(0)\in\scr {L}_{HS}(H)$ for any $t\in[0,T$] and
$\int_0^T s^{-2\beta}\|e^{s A}\si(0)\|_{HS}^2\d s<\infty$ for some
$\beta\in (0,\frac{1}{2})$. Moreover assume that there exists
$\rho_2>0$ such that $\|\sigma(\xi)-\sigma(\eta)\|_{HS}\leq
\rho_2\|\xi(\theta)-\eta(\theta)\|_\infty,\xi,\eta\in\mathscr{C}$.
\item[\textmd{(A4)}]  $G:\mathscr{C}\rightarrow H$ such that
$G(0)=0$ and there exist $\alpha\in[0,1]$ and $\rho_3>0$ such
that $\|(-A)^\alpha G(\xi)-(-A)^\alpha
G(\eta)\|_H\leq\rho_3\|\xi(\theta)-\eta(\theta)\|_\infty$, where
$(-A)^\alpha$ is the fractional power of $-A$.
\item[\textmd{(A5)}] $\rho_3\Big(\|(-A)^{-\alpha}\|+M_{1-\alpha}\int_0^T\frac{e^{\nu
t}}{t^{1-\alpha}}\d t\Big)<1$, where $(-A)^{-\alpha}$ is the inverse
of $-A$ and $M_{1-\alpha}$ is the positive constant in Lemma
\ref{Lemma 3} below.
\end{enumerate}

\begin{rem}
{\rm We remark from $(A3)$ that $\sigma(\xi)-\sigma(\eta)\in\scr
{L}_{HS}(H)$ while $\sigma$ need not be Hilbert-Schmidt. }
\end{rem}

 Note that $\int_0^T s^{-2\beta}\|e^{s A}\si(0)\|_{HS}^2\d
s<\infty$ remains true by replacing $\beta$ with a smaller positive
number. So, we may take in  (A3)   $\beta\in (0, \ff 1 p)$ for
$p>2$. Denote by $\mathscr{H}_p$ the Banach space of all $H$-valued
continuous adapted processes $Y$ defined on the time interval
$[-\tau,T]$ such that $X(t)=\xi(t),t\in[-\tau,0]$, and
\begin{equation*}
\|X\|_p:=\Big(\mathbb{E}\sup_{t\in[-\tau,T]}\|X(t)\|^p_H\Big)^{\frac{1}{p}}<\infty.
\end{equation*}
By the classical Banach-fixed-point-theorem approach,  Eq.
\eqref{eq19} admits a unique mild solution
$\{X(t,\xi)\}_{t\in[0,T]}$.  That is, for any $\xi\in\mathscr{C}$
there exists a unique $H$-valued adapted process
$\{X(t,\xi)\}_{t\in[0,T]}$, which is continuous in
$L^2(\Omega,\mathbb{P})$, such that
\begin{equation*}
\begin{split}
X(t)&=e^{tA}[\xi(0)+G(\xi)]-G(X_t)-\int_0^tAe^{(t-s)A}G(X_s)\d s\\
&+\int_0^te^{(t-s)A}b(X_s)\d s+\int_0^te^{(t-s)A}\sigma(X_s)\d W(s).
\end{split}
\end{equation*}

\begin{rem}
{\rm Since $\sigma$ need not be Hilbert-Schmidt, the
Burkhold-Davis-Gundy inequality \cite[Proposition, p196]{DZ} may not
hold for $p=2$. Therefore the mild solution is shown by the fixed
point theorem on Banach space $\mathscr{H}_p,p>2$. }
\end{rem}

The following  lemma \cite[Theorem 6.13]{P92} is vital to
deal with the neutral term $G$.
\begin{lem}\label{Lemma 3}
{\rm Under $(A1)$, for any $\beta\in(0,1]$ and  $x\in \mathcal
{D}((-A)^{\beta})$
\begin{equation*}
e^{tA}(-A)^{\beta}x=(-A)^{\beta}e^{tA}x
\end{equation*}
and there exists  $M_{\beta}>0$ such that for any $t>0$
\begin{equation*}
\parallel(-A)^{\beta}e^{tA}\parallel\leq M_{\beta }t^{-\beta}e^{\nu
t}.
\end{equation*}
}
\end{lem}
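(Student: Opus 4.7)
The plan is to invoke the standard theory of fractional powers of generators of analytic semigroups, essentially following the construction in Pazy's monograph. First, since $(A,\mathcal{D}(A))$ generates an analytic $C_0$-semigroup with $\|e^{tA}\|\leq Me^{\nu t}$, by shifting to $A-\mu I$ for some $\mu>\nu$ one obtains a sectorial operator whose spectrum lies strictly in a left half-plane and whose semigroup is uniformly bounded. This lets us define the negative fractional powers via the Balakrishnan integral
\begin{equation*}
(\mu I - A)^{-\beta}x=\frac{1}{\Gamma(\beta)}\int_0^\infty s^{\beta-1}e^{-\mu s}e^{sA}x\,ds,\qquad x\in H,\ \beta\in(0,1],
\end{equation*}
and then declare $(-A)^{\beta}$ to be the densely defined closed inverse of $(-A)^{-\beta}$, obtained in the usual way after undoing the shift.

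Next, for the commutation identity, I would use the Dunford representation
\begin{equation*}
e^{tA}=\frac{1}{2\pi i}\int_\Gamma e^{t\lambda}(\lambda I-A)^{-1}\,d\lambda
\end{equation*}
on a suitable contour $\Gamma$ in the resolvent set. Each resolvent $(\lambda I-A)^{-1}$ manifestly commutes with $(-A)^{\beta}$ on $\mathcal{D}((-A)^{\beta})$; passing $(-A)^{\beta}$ (which is closed) through the Bochner integral yields $e^{tA}(-A)^{\beta}x=(-A)^{\beta}e^{tA}x$ for every $x\in\mathcal{D}((-A)^{\beta})$.

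For the norm estimate, I would first establish the base case $\beta=1$: differentiating the Dunford integral and deforming $\Gamma$ to bound $\|Ae^{tA}\|\leq M_1 t^{-1}e^{\nu t}$. For the intermediate range $\beta\in(0,1)$, I would invoke the integral representation
\begin{equation*}
(-A)^{\beta}e^{tA}x=\frac{\sin(\pi\beta)}{\pi}\int_0^\infty s^{\beta-1}(-A)(sI-A)^{-1}e^{tA}x\,ds,
\end{equation*}
combined with the two-sided bound $\|(-A)(sI-A)^{-1}e^{tA}\|\leq C\min\{s^{-1},\,t^{-1}e^{\nu t}\}$, and split the integral at $s\sim t^{-1}$ to produce the $t^{-\beta}e^{\nu t}$ scaling. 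The case $\beta=1$ is read off directly, and letting $\beta\to 0^+$ recovers the semigroup bound.

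The main obstacle is a careful bookkeeping of the exponential factor $e^{\nu t}$ in the case $\nu>0$, since the bare smoothing estimates $\|A^n e^{tA}\|\leq C_n t^{-n}$ are really contractive-type estimates and must be tracked through the shifted contour. Once the interpolation inequality is stated in the right form, however, the bound $\|(-A)^{\beta}e^{tA}\|\leq M_{\beta}t^{-\beta}e^{\nu t}$ for $\beta\in(0,1]$ follows uniformly, completing the lemma.
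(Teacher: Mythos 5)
The paper does not prove this lemma at all: it is quoted verbatim from Pazy \cite[Theorem~6.13]{P92}, so the ``paper's proof'' is a citation, and your sketch is essentially the standard argument given in that reference (shift to $\mu I-A$ with $\mu>\nu$ to define the fractional powers, Dunford integral for the commutation, $\|Ae^{tA}\|\le M_1t^{-1}e^{\nu t}$ plus the Balakrishnan representation and a split of the $s$-integral at $s\sim t^{-1}$ for the norm bound). The only blemish is cosmetic: your stated bound $\|(-A)(sI-A)^{-1}e^{tA}\|\le C\min\{s^{-1},t^{-1}e^{\nu t}\}$ should read something like $Ce^{\nu t}\min\{1,s^{-1}t^{-1}\}$ (the operator $(-A)(sI-A)^{-1}=-I+s(sI-A)^{-1}$ is bounded uniformly, not by $s^{-1}$), but the crossover point and the resulting $t^{-\beta}e^{\nu t}$ scaling you compute are correct.
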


\begin{thm}\label{Theorem 3}
{\rm Let $(A1)-(A5)$ hold and  $\mathbb{P}_\xi$ be the law of
$X(\cdot,\xi)$, solution process of Eq. \eqref{eq19}. Assume further
that $\sigma$ is bounded by
$\tilde{\sigma}:=\sup_{\xi\in\mathscr{C}}\|\sigma(\xi)\|$. Then
$\mathbb{P}_\xi\in T_2(C)$ for some $C>0$ on the metric space
$\bar{\mathcal {X}}:=\mathcal {C}([0,T];H)$ with respect to the
metric
\begin{equation*}
\d_\infty(\gamma_1,\gamma_2):=\sup_{0\leq t\leq
T}\|\gamma_1(t)-\gamma_2(t)\|_H,\ \ \ \gamma_1,\gamma_2\in\bar{\mathcal{X}}.
\end{equation*}}
\end{thm}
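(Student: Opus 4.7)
The plan is to mimic the Girsanov-transformation coupling strategy of Theorem \ref{Theorem 1}, now adapted to the mild-solution setting, with hypothesis (A5) playing the central role of absorbing the contribution of the neutral term. Given $\mathbb{Q}\ll\mathbb{P}_\xi$ on $\bar{\mathcal{X}}$, I would set $\tilde{\mathbb{Q}}:=\frac{\d\mathbb{Q}}{\d\mathbb{P}_\xi}(X(\cdot,\xi))\,\mathbb{P}$. Exactly as in Theorem \ref{Theorem 1} one has $\mbox{\bf H}(\tilde{\mathbb{Q}}|\mathbb{P})=\mbox{\bf H}(\mathbb{Q}|\mathbb{P}_\xi)$, and the infinite-dimensional martingale representation theorem supplies a predictable $H$-valued process $h$ with $\mbox{\bf H}(\mathbb{Q}|\mathbb{P}_\xi)=\tfrac12\mathbb{E}^{\tilde{\mathbb{Q}}}\int_0^T\|h(t)\|_H^2\,\d t$. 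Girsanov's theorem then produces a cylindrical $\tilde{\mathbb{Q}}$-Wiener process $\tilde W(t)=W(t)-\int_0^th(s)\,\d s$, and I define $Y$ as the mild solution of \eqref{eq19} driven by $\tilde W$; by uniqueness its $\tilde{\mathbb{Q}}$-law is $\mathbb{P}_\xi$, so $(X,Y)$ is a coupling of $(\mathbb{Q},\mathbb{P}_\xi)$ and the task reduces to establishing
\begin{equation*}
\mathbb{E}^{\tilde{\mathbb{Q}}}\sup_{0\le t\le T}\|X(t)-Y(t)\|_H^2\le C\,\mathbb{E}^{\tilde{\mathbb{Q}}}\int_0^T\|h(t)\|_H^2\,\d t.
\end{equation*}

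Subtracting the mild formulations of $X$ and $Y$ yields the decomposition
\begin{equation*}
X(t)-Y(t)=-[G(X_t)-G(Y_t)]-\int_0^tAe^{(t-s)A}[G(X_s)-G(Y_s)]\,\d s+I_b(t)+I_\sigma(t)+I_h(t),
\end{equation*}
with $I_b,I_\sigma,I_h$ denoting the drift, stochastic-convolution, and Girsanov-drift terms. Writing $G(\xi)-G(\eta)=(-A)^{-\alpha}\bigl[(-A)^\alpha G(\xi)-(-A)^\alpha G(\eta)\bigr]$ and $Ae^{tA}=-(-A)^{1-\alpha}e^{tA}(-A)^\alpha$, assumption (A4) together with Lemma \ref{Lemma 3} bounds the two neutral contributions uniformly in $t\in[0,T]$ by
\begin{equation*}
\rho_3\Big(\|(-A)^{-\alpha}\|+M_{1-\alpha}\int_0^T\frac{e^{\nu s}}{s^{1-\alpha}}\,\d s\Big)\sup_{0\le r\le t}\|X(r)-Y(r)\|_H,
\end{equation*}
and (A5) asserts precisely that this coefficient is strictly less than $1$, which is what allows the neutral part to be moved to the left-hand side after squaring.

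Of the remaining three terms, $I_b$ is controlled by (A2) and $\|e^{tA}\|\le Me^{\nu T}$, yielding $C\int_0^t\sup_{0\le r\le s}\|X(r)-Y(r)\|_H^2\,\d s$; the Girsanov drift $I_h$ is dispatched by Cauchy--Schwarz and the boundedness of $\sigma$, yielding $C\int_0^T\|h(s)\|_H^2\,\d s$. The genuinely delicate piece is the stochastic convolution $I_\sigma(t)=\int_0^te^{(t-s)A}[\sigma(X_s)-\sigma(Y_s)]\,\d\tilde W(s)$, because $e^{(t-s)A}$ is not a martingale and the Burkholder--Davis--Gundy inequality does not apply to $\sup_{0\le t\le T}\|I_\sigma(t)\|_H^2$ directly. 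To handle it I would invoke the factorization method of Da Prato--Kwapien--Zabczyk with the exponent $\beta\in(0,1/p)$ supplied by (A3), representing $I_\sigma$ as a deterministic $(t-r)^{\beta-1}e^{(t-r)A}$-convolution of an auxiliary $L^p$-integrable process, and exploiting (A3) to obtain
\begin{equation*}
\mathbb{E}^{\tilde{\mathbb{Q}}}\sup_{0\le t\le T}\|I_\sigma(t)\|_H^2\le C\int_0^T\mathbb{E}^{\tilde{\mathbb{Q}}}\sup_{0\le r\le s}\|X(r)-Y(r)\|_H^2\,\d s.
\end{equation*}

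Combining the four estimates, absorbing the neutral contribution via (A5), and closing with Gronwall's inequality then gives the required bound and hence $\mathbb{P}_\xi\in T_2(C)$ on $(\bar{\mathcal{X}},\d_\infty)$. The main technical obstacle is precisely the running-supremum bound on $I_\sigma$, which motivates both the integrability condition $\int_0^T s^{-2\beta}\|e^{sA}\sigma(0)\|_{HS}^2\,\d s<\infty$ in (A3) and the invocation of the factorization method; all remaining steps follow the finite-dimensional blueprint of Theorem \ref{Theorem 1}, with the contractive neutral condition (A4)--(A5) playing the role previously played by \eqref{eq7}.
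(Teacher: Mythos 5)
Your proposal is correct and follows essentially the same route as the paper: the same Girsanov/coupling reduction, the same mild-solution decomposition with the neutral contributions absorbed via (A4), Lemma \ref{Lemma 3} and the smallness condition (A5), and a Gronwall closure. The only difference is that where the paper simply cites the Burkholder--Davis--Gundy inequality for the running supremum of the stochastic convolution, you justify that step explicitly via the factorization method, which is if anything the more careful treatment of the one genuinely delicate point.
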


\begin{proof}
We should point out that  the former parts of the argument is
similar to that of Theorem \ref{Theorem 1}, while
due to the unboundedness of infinitesimal generator $A$ and the
appearance of neutral term $G$, the It\^o formula  is not
unavailable although $\sigma(\xi)-\sigma(\eta)\in\scr
{L}_{HS}(H),\xi,\eta\in\mathscr{C}$. In what follows we will use the theory of the semigroup.
Let $\mathbb{P}_\xi$ be the law of
$X(\cdot,\xi)$ on $\mathcal {X}$ and $\tilde{\mathbb{Q}}$  defined
by \eqref{eq17}. By the martingale representation theorem
\cite[Theorem 8.2, p220]{DZ} we can also deduce that there exists a
predictable process $h\in H$ with $\int_0^T\|h(s)\|^2_H\d s<\infty$,
$\mathbb{P}$-\mbox{a.s.}, such that
\begin{equation*}
\mbox{\bf H}(\tilde{\mathbb{Q}}|\mathbb{P})=\mbox{\bf
H}(\mathbb{Q}|\mathbb{P}_\xi)=\frac{1}{2}\mathbb{E}^{\tilde{\mathbb{Q}}}\int_0^T\|h(t)\|^2_H\d
t.
\end{equation*}
Due to the Girsanov theorem
\begin{equation*}
\tilde{W}(t):=W(t)-\int_0^th(s)\d s,\ \ \ t\in[0,T],
\end{equation*}
is a Brownian motion with respect to $\{\mathcal {F}_t\}_{t\geq0}$
on the probability space $(\Omega,\mathcal {F},\tilde{\mathbb{Q}})$.
Then,  under the measure $\tilde{\mathbb{Q}}$, the process
$\{X(t,\xi)\}_{t\in[0,T]}$ satisfies
\begin{equation}\label{eq20}
\begin{cases}
\d[X(t)+G(X_t)]=[AX(t)+b(X_t)+\sigma(X_t)h(t)]\d t+\sigma(X_t)\d \tilde{W}(t),\\
X_0=\xi.
\end{cases}
\end{equation}
Let $\{Y(t,\xi)\}_{t\in[0,T]}$ be the solution of the following
equation
\begin{equation}\label{eq21}
\begin{cases}
\d[Y(t)+G(Y_t)]=[AY(t)+b(Y_t)]\d t+\sigma(Y_t)\d \tilde{W}(t),\\
Y_0=\xi.
\end{cases}
\end{equation}
By the uniqueness, under $\tilde{\mathbb{Q}}$ the law of $Y(\cdot)$
is $\mathbb{P}_\xi$. Thus $(X,Y)$ under $\tilde{\mathbb{Q}}$ is a
coupling of $(\mathbb{Q},\mathbb{P}_\xi)$, and
\begin{equation*}
(W_{2,\d}(\mathbb{Q},\mathbb{P}_\xi))^2\leq\mathbb{E}^{\tilde{\mathbb{Q}}}(\d_\infty(X,Y)).
\end{equation*}
Thus we only need to show
\begin{equation}\label{eq25}
\mathbb{E}^{\tilde{\mathbb{Q}}}\Big(\sup_{0\leq t\leq
T}\|X(t)-Y(t)\|^2_H\Big)\leq
C\mathbb{E}^{\tilde{\mathbb{Q}}}\int_0^T\|h(t)\|^2_H\d t.
\end{equation}
Note from \eqref{eq20} and \eqref{eq21}, together with the
inequality \eqref{eq6}, that for any $\epsilon\in(0,1)$
\begin{equation}\label{eq26}
\begin{split}
\|X(t)-Y(t)\|_H^2&\leq \frac{1}{\epsilon}\Big\{\|G(Y_t)-G(X_t)\|_H+\Big\|\int_0^tAe^{(t-s)A}[G(Y_s)-G(X_s)]\d s\Big\|_H\Big\}^2\\
&\quad+\frac{3}{1-\epsilon}\Big\{\Big\|\int_0^te^{(t-s)A}[b(X_s)-b(Y_s)]\d
s\Big\|_H^2\\
&\quad+\Big\|\int_0^te^{(t-s)A}\sigma(X_s)h(s)\d
s\Big\|_H^2\Big\}\\
&\quad+\frac{3}{1-\epsilon}\Big\|\int_0^te^{(t-s)A}[\sigma(X_s)-\sigma(Y_s)]\d
\tilde{W}(s)\Big\|_H^2\\
&:=I_1(t)+I_2(t)+I_3(t), \ \ \ t\in[0,T],
\end{split}
\end{equation}
where we have also used the fundamental inequality
$(a+b+c)^3\leq3(a^2+b^2+c^2)$ for $a,b,c\in\mathbb{R}$. Note
 that $(-A)^{-\alpha}$ is bounded by \cite[Lemma 6.3, p71]{P92},
 and,
 in the light of \cite[Theorem 6.8, p72]{P92},
\begin{equation*}
(-A)^{\alpha+\beta}x=(-A)^\alpha\cdot(-A)^\beta x
\end{equation*}
for  $x\in\mathcal {D}((-A)^\gamma)$, the domain of $(-A)^\gamma$,
with $\gamma:=\max\{\alpha,\beta,\alpha+\beta\}$,
$\alpha,\beta\in\mathbb{R}$. By (A4), it follows from Lemma
\ref{Lemma 3} that
\begin{equation*}
\begin{split}
\sup_{0\leq t\leq T}I_1(t)&=\frac{1}{\epsilon}\sup_{0\leq t\leq T}\Big\{\|(-A)^{-\alpha}((-A)^{\alpha}G(Y_t)-(-A)^{\alpha}G(X_t))\|_H\\
&\quad+\Big\|\int_0^t(-A)e^{(t-s)A}(-A)^{-\alpha}[(-A)^{\alpha}G(Y_s)-(-A)^{\alpha}G(X_s)]\d
s\Big\|_H\Big\}^2\\
&=\frac{1}{\epsilon}\sup_{0\leq t\leq T}\Big\{\|(-A)^{-\alpha}((-A)^{\alpha}G(Y_t)-(-A)^{\alpha}G(X_t))\|_H\\
&\quad+\Big\|\int_0^tA^{1-\alpha}e^{(t-s)A}[(-A)^{\alpha}G(Y_s)-(-A)^{\alpha}G(X_s)]\d
s\Big\|_H\Big\}^2\\
&\leq\frac{1}{\epsilon}\Big\{\rho_3\Big(\|(-A)^{-\alpha}\|+\int_0^T\|A^{1-\alpha}e^{tA}\|\d
t\Big)\sup_{0\leq t\leq T}\|X(t)-Y(t)\|_H\Big\}^2\\
&\leq\frac{1}{\epsilon}\Big\{\rho_3\Big(\|(-A)^{-\alpha}\|+M_{1-\alpha}\int_0^T\frac{e^{\nu
t}}{t^{1-\alpha}}\d t\Big)\sup_{0\leq t\leq
T}\|X(t)-Y(t)\|_H\Big\}^2,
\end{split}
\end{equation*}
where $X(t)=Y(t),t\in[-\tau,0]$. Taking
$\epsilon=\rho_2\Big(\|(-A)^{-\alpha}\|+M_{1-\alpha}\int_0^T\frac{e^{\nu
t}}{t^{1-\alpha}}\d t\Big)$ we obtain from (A5) that
\begin{equation*}
\sup_{0\leq t\leq T}I_1(t)\leq\epsilon\sup_{0\leq t\leq
T}\|X(t)-Y(t)\|_H^2.
\end{equation*}
Thus due to \eqref{eq26}
\begin{equation}\label{eq22}
\mathbb{E}\Big(\sup_{0\leq t\leq
T}\|X(t)-Y(t)\|_H^2\Big)\leq\frac{1}{1-\epsilon}\mathbb{E}\Big(\sup_{0\leq
t\leq T}I_2(t)\Big)+\frac{1}{1-\epsilon}\mathbb{E}\Big(\sup_{0\leq
t\leq T}I_3(t)\Big).
\end{equation}
Next, by the H\"older inequality, (A2) and the boundedness of
$\sigma$, we have
\begin{equation}\label{eq23}
\begin{split}
\mathbb{E}\Big(\sup_{0\leq t\leq T}I_2(t)\Big)
&\leq\frac{3T}{1-\epsilon}\Big\{\rho_1^2\tilde{M}^2\int_0^T\mathbb{E}\Big(\sup_{0\leq
s\leq t}\|X(s)-Y(s)\|_H^2\Big)\d
t\\
&\quad+\tilde{M}^2\tilde{\sigma}^2\int_0^T\|h(s)\|^2_H\d s\Big\},
\end{split}
\end{equation}
where $\tilde{M}:=M\sup_{t\in[0,T]}\|e^{tA}\|$. Furthermore by the
Burkhold-Davis-Gundy inequality and (A3) there exists $C>0$ such
that
\begin{equation}\label{eq24}
\begin{split}
\mathbb{E}\Big(\sup_{0\leq t\leq T}I_3(t)\Big)
&\leq\frac{3C\rho_2^2}{1-\epsilon}\int_0^T\mathbb{E}\Big(\sup_{0\leq
s\leq t}\|X(s)-Y(s)\|_H^2\Big)\d t.
\end{split}
\end{equation}
Then \eqref{eq25} follows by substituting \eqref{eq23} and
\eqref{eq24} into \eqref{eq22} and applying the Gronwall inequality,
and the proof is therefore complete.
\end{proof}

To demonstrate the applications of Theorem \ref{Theorem 3}, we
 give an illustrative example motivated by \cite[Example
4.1]{HH98}.
\begin{exa}
{\rm Let $H:=L^2([0,\pi])$, $A:=\frac{\partial^2}{\partial x^2}$
with the domain $ \mathcal {D}:=H^2(0,\pi)\cap H^1_0(0,\pi)$, and
$\{W(t,x),t\in[0,T],x\in[0,\pi]\}$ be a Brownian sheet defined on a
completed probability space $(\Omega,\mathcal {F},\mathbb{P})$,
i.e., it is a centered Gaussian random field with the covariance
$\mathbb{E}(W(t,x)W(s,y))=(t\wedge s)(x\wedge y)$ for $s,t\in[0,T]$
and $x,y\in[0,\pi]$. Let $\phi:\mathbb{R}\mapsto\mathbb{R}$ be
Lipschitzian, i.e., there exists $L>0$ such that
$|\phi(x)-\phi(y)|\leq L|x-y|,x,y\in\mathbb{R}$. Assume further that
$\varphi:[-\tau,0]\times[0,\pi]\times[0,\pi]\mapsto\mathbb{R}$ is
measurable such that
$\varphi(\cdot,\cdot,0)=\varphi(\cdot,\cdot,\pi)=0$ and
\begin{equation}\label{eq04}
N:=\int_{-\tau}^0\int_0^\pi\int_0^\pi\Big(\frac{\partial}{\partial
x}\varphi(\theta,\xi,x)\Big)^2\d \xi\d x\d \theta<\infty.
\end{equation}
Consider neutral functional SPDE
\begin{equation}\label{eq01}
\begin{split}
\frac{\partial}{\partial t}\Big[u(t,x)&+\int_0^\pi\int_{-\tau}^0\varphi(\theta,\xi,x)u(t+\theta,\xi)\d\theta\d\xi\Big]\\
&=\Big\{\frac{\partial^2}{\partial
x^2}u(t,x)+\phi\Big(\int_{-\tau}^0u(t+\theta,x)\d\theta\Big)\Big\}+\frac{\partial^2W}{\partial
t\partial x} (t,x)
\end{split}
\end{equation}
with the Dirichlet boundary condition
\begin{equation*}
X(t,0)=X(t,\pi)=0,\ \ \ t\in[0,T],
\end{equation*}
and the initial condition
\begin{equation*}
X(\theta,x)=\psi(\theta,x),\ \ \ \theta\in[-\tau,0],\ \ \
x\in[0,\pi].
\end{equation*}
Recall that $e_n(x):=(2/\pi)^{1/2}\sin nx,
n\in\mathbb{N},x\in[0,\pi]$, is a complete orthonormal system of
$H$, and that the eigenvector of A with eigenvalue $-n^2$, i.e.,
$Ae_n=-n^2e_n$. Then we have
\begin{equation*}
W(t)(x):=W(t,x)=\sum\limits_{n=1}^\infty e_n(x)\int_0^t\int_0^\pi
e_n(y)W(\d s,\d y),
\end{equation*}
which is a cylindrical Wiener process on $H$. For $t\in[0,T]$ and
$x\in[0,\pi]$, let
\begin{equation*}
X(t)(x):=u(t,x),\ \ \
G(X_t)(x):=\int_0^\pi\int_{-\tau}^0\varphi(\theta,\xi,x)X(t+\theta,\xi)\d\theta\d\xi
\end{equation*}
and
\begin{equation*}
b(X_t)(x):=\phi\Big(\int_{-\tau}^0X(t+\theta,x)\d\theta\Big).
\end{equation*}
Then Eq. \eqref{eq01} can be rewritten in the  form
\eqref{eq19}. Observe that $A$ generates a strongly continuous
semigroup $\{e^{tA}\}_{t\in[0,T]}$, which is compact, analytic and
self-adjoint, and
\begin{equation}\label{eq02}
e^{tA}\xi=\sum\limits_{n=1}^\infty e^{-n^2t}\langle \xi,e_n\rangle
e_n,\ \ \ \xi\in H.
\end{equation}
This gives that $\|e^{tA}\|\leq e^{-t}$ and (A1) holds with $M=1$
and $\nu=-1$. Moreover, due to $\phi$ is Lipschitzian we get from
the H\"older inequality that
\begin{equation*}
\begin{split}
\|b(\xi)-b(\eta)\|_H^2
&\leq L^2\tau\int_{-\tau}^0\int_0^\pi
(\xi(t+\theta,x)-\eta(t+\theta,x))^2 \d x\d\theta\\
&\leq L^2\tau^2\|\xi-\eta\|_H^2,\ \ \ \xi,\eta\in H.
\end{split}
\end{equation*}
Hence (A2) holds with $\rho_1=L\tau$. By the definition of
Hilbert-Schimdt, together with \eqref{eq02}, it follows that
\begin{equation*}
\int_0^Tt^{-2\beta}\|e^{tA}\|_{HS}^2\d
s=\int_0^Tt^{-2\beta}\sum\limits_{n=1}^\infty\|e^{tA}e_n\|_H^2\d
s=\int_0^Tt^{-2\beta}\sum\limits_{n=1}^\infty e^{-2n^2t}\d s.
\end{equation*}
Thus $(A3)$ holds for any $\beta\in(0,\frac{1}{4})$ and
$\rho_2=0$. Furthermore note that
\begin{equation}\label{eq03}
(-A)^{-\frac{1}{2}}\xi=\sum\limits_{n=1}^\infty\frac{1}{n}\langle\xi,e_n\rangle
e_n,\ \ \ \xi\in H,\ \ \
(-A)^{\frac{1}{2}}\xi=\sum\limits_{n=1}^\infty
n\langle\xi,e_n\rangle e_n,\ \ \ \xi\in\mathcal
{D}((-A)^{\frac{1}{2}})
\end{equation}
which in particular yields that $\|(-A)^{-\frac{1}{2}}\|=1$. As a
result, recalling that
$\varphi(\cdot,\cdot,0)=\varphi(\cdot,\cdot,\pi)=0$, we derive  from
\eqref{eq04}, \eqref{eq03} and the H\"older inequality that
\begin{equation*}
\begin{split}
\|(-A)^{\frac{1}{2}}(G(X_t)-G(Y_t))\|_H^2&=\Big\|\sum\limits_{n=1}^\infty
n\langle G(X_t)-G(Y_t),e_n\rangle_He_n\Big\|_H^2\\
&=\sum\limits_{n=1}^\infty\Big(
n\int_0^\pi(G(X_t)(x)-G(Y_t)(x))e_n(x)\d x\Big)^2\\
&=\sum\limits_{n=1}^\infty\Big(
n\int_0^\pi\int_0^\pi\int_{-\tau}^0\varphi(\theta,\xi,x)Z(t+\theta,\xi)\d\theta\d\xi e_n(x)\d x\Big)^2\\
&=\sum\limits_{n=1}^\infty\Big(
\int_0^\pi\int_0^\pi\int_{-\tau}^0\frac{\partial}{\partial
x}\varphi(\theta,\xi,x)Z(t+\theta,\xi)\d\theta\d\xi \tilde{e}_n\d
x\Big)^2\\
&=\sum\limits_{n=1}^\infty\Big\langle\int_0^\pi\int_{-\tau}^0\frac{\partial}{\partial
x}\varphi(\theta,\xi,\cdot)Z(t+\theta,\xi)\d\theta\d\xi,\tilde{e}_n\Big\rangle_H^2\\
&=\Big\|\int_0^\pi\int_{-\tau}^0\frac{\partial}{\partial
x}\varphi(\theta,\xi,\cdot)Z(t+\theta,\xi)\d\theta\d\xi\Big\|_H^2\\
&=\int_0^\pi\Big(\int_0^\pi\int_{-\tau}^0\frac{\partial}{\partial
x}\varphi(\theta,\xi,x)Z(t+\theta,\xi)\d\theta\d\xi\Big)^2\d x\\
&\leq\int_0^\pi\Big\{\int_0^\pi\int_{-\tau}^0\Big(\frac{\partial}{\partial
x}\varphi(\theta,\xi,x)\Big)^2\d\theta\d\xi\\
&\quad\times\int_0^\pi\int_{-\tau}^0Z^2(t+\theta,\xi)\d\theta\d\xi\Big\}\d
x\\
&\leq N\tau\|X_t-Y_t\|^2_\infty,
\end{split}
\end{equation*}
where  $Z(t):=X(t)-Y(t)$ and $\tilde{e}_n:=\sqrt{\frac{2}{\pi}}\cos
nx$, which is also a complete orthonormal system of $H$.
Consequently the law of Eq. \eqref{eq01} $\mathbb{P}_\psi\in T_2(C)$
for some $C>0$ under the metric $\d_{\infty}$ whenever $N\tau$ is
sufficiently small. }
\end{exa}

\begin{rem}
{\rm In this paper we discuss TCIs for neutral functional SDEs and
SPDEs driven by Brownian motion, where the Girsanov transformation
plays an important role as we have explained. As pointed in
\cite[Remark 2.3]{W10}, this approach is, however, unavailable for
the jump-process cases. In the future, we shall establish
by the  Malliavin calculus method on the Poisson space the $W_1H$
transportation inequalities for the distributions of the {\it
segment processes} for a class of neutral functional SDEs with
jumps. }
\end{rem}

\begin{rem}
{\rm There are many interesting applications of the TCIs, e.g., in
Tsirel'son-type inequality and Hoeffding-type inequality, see
\cite{DGW04,WZ04,WZ06}, and in concentration of empirical measure
\cite{M10,W10}. }
\end{rem}


\end{document}